\newtheorem{theorem}{Theorem}[section]
\newtheorem{lemma}[theorem]{Lemma}
\newtheorem{proposition}[theorem]{Proposition}
\newtheorem{cor}[theorem]{Corollary}  
\newtheorem{definition}[theorem]{Definition}
\theoremstyle{definition}
\theoremstyle{remark}
\newtheorem{remark}[theorem]{Remark}
\numberwithin{equation}{section}
\begin{document}

\title
[Laver]
{ Laver tables and combinatorics}

\author{Philippe Biane }
\email{biane@univ-mlv.fr}
\date{}
\address{Institut Gaspard-Monge, universit\'e Paris-Est Marne-la-Vall\'ee,
5 Boulevard Descartes, Champs-sur-Marne, 77454, Marne-la-Vall\'ee cedex 2,
France}
\keywords{Laver tables}

\date{\today}


\begin{abstract}
The Laver tables are finite combinatorial objects with a simple  elementary definition, which were introduced by R. Laver in \cite{La} from considerations of logic and set theory. 
Although   these objects exhibit some fascinating properties, they seem to have escaped notice from the combinatorics community. My aim is to give a short introduction to this topic, presenting the definition and  main properties and stating a few open problems, which should arouse the interest of combinatorialists.  
\end{abstract}
\maketitle


\tableofcontents

\section{Introduction}  In the $90$'s  the logician R. Laver \cite{La}, \cite{La2}, motivated by questions in the  theory of large cardinals, introduced the so-called Laver tables, which are the subject of this paper. In short, a Laver table is a finite set endowed with a binary operation $\star$, which is left distributive, meaning that 
$p\star(q\star r)=(p\star q)\star(p\star r)$ holds for all $p,q,r$, and in which one of the elements acts, by right multiplication, as a cyclic permutation of the elements of the set. The formal definition is given in section 2 below. Laver established the existence  and uniqueness of such a structure for sets of cardinal $2^n$, for some integer $n$, moreover he  discovered many of their interesting combinatorial properties. Some further work was done at that time, mainly by logicians and algebraists, notably Dehornoy, Dougherty, Dr\'apal, Jech, however it seems that this subject has been largely ignored by combinatorialists  despite the fact that the Laver tables have a strong and highly nontrivial combinatorial content.
In this paper I will present some of the most basic properties of Laver tables, from a combinatorial perspective, but I will also try to give some indications on the notions of set theory which lead to their discovery.
I hope that this paper might encourage combinatorialists to look deeper into this beautiful subject.
As I mentioned above, the Laver tables carry a left distributive operation. The study of such operations is not part of the mainstream of algebraic combinatorics rather,  it comes from two other sources, set theory and the theory of braids.
A lot of  information about these subjects may be found in P. Dehornoy's book \cite{Deh} or his recent preprint \cite{Deh2}. The difficulty in studying left distributive structures comes from the fact that the left distributive identity $p\star(q\star r)=(p\star q)\star(p\star r)$
does not have the same number of terms on each side. This implies that computing with   operations satisfying this identity leads to deep recursions. On the other hand experimental study of the Laver tables, which can be made, up to rather large size, using computers, shows that these tables seem both to satisfy many regularity properties and yet to escape any global description. In particular some basic questions have surprising answers: Laver has proved, using a large cardinal axiom, which is not provable in usual ZFC theory, that the projective limit of the Laver tables is a free system. As we shall see below this statement has a very concrete translation into properties of the Laver tables: it asserts that a certain sequence, with an elementary combinatorial definition, is unbounded. However,  up to now no proof is known of this fact  which does not use this  large cardinal axiom. It is a challenge for combinatorialists to find an elementary proof of this fact (or to disprove it...).
This situation bears some resemblance with   the study of the iteration of rational maps on the complex plane in complex dynamics, which gives rise to Julia sets or the Mandelbrot set (see e.g. \cite{Milnor}): these objects also are constructed by  very simple recursive laws, they exhibit some regularity and beautiful features, which  can be seen on the computer generated  pictures that are easily found on the internet, yet their structure is very complicated and many questions concerning them are still open. I hope that Laver tables might attract similar attention from the combinatorialists which would lead to much progress.

This paper is organized as follows: in the next section we give the definition of Laver tables and  make some general comments on left-distributive operations. In section \ref{basic} we give some basic properties of Laver tables then, in section \ref{comefrom}, we explain how these objects were discovered by  Laver, starting from considerations of set theory. We go on with some further properties of Laver tables in sections \ref{further} and \ref{rows}.
 We explain in section \ref{interest} why these are interesting and subtle combinatorial objects. In particular we state some difficult open problems. In section \ref{freq}, we show that the periods of Laver tables have asymptotic frequencies, which define a probability measure on ${\bf N}\cup\{\infty\}$. It would be interesting to describe more precisely this probability distribution. Finally, in section \ref{Max}, we introduce a particular class of elements of the Laver tables. These are, in some sense, the simplest elements and a remarkable property of these elements is that they form a  subset which is stable under the operation  of the Laver table. Moreover they are parameterized by  binary partitions which are objects related to more  mainstream algebraic combinatorics. 
Most of the results of this paper are not new, except perhaps the content of section \ref{freq} and \ref{Max}, and can be found in the papers cited in the bibliography, although sometimes in slightly different form, so that I did not try to track down the exact reference for each of them.

I would like to thank Patrick Dehornoy for introducing me to this beautiful subject, as well as Ales Dr\'apal for  communicating me his preprint \cite{Dra4}. Both of them made useful remarks on a first version of this paper.
\section{What are Laver tables?}\label{laverdef}
\subsection{A binary operation}
Let $N$ be a positive integer, there exists a unique binary operation $\star$ on the set $\{1,2,\ldots,N\}$ such that, for all $p,q$
\begin{align}\label{ld1}p\star 1&=p+1 \mod (N)\\
\label{ld}p\star(q\star 1)&=(p\star q)\star(p\star 1)
\end{align}
Indeed
property (\ref{ld1}) implies $$N\star 1=1$$ then using (\ref{ld}) one gets
$$N\star2=N\star(1\star 1)=(N\star 1)\star(N\star 1)=1\star 1=2.$$ By induction on $q$ one has:
$$N\star q=N\star((q-1)\star 1)=(N\star (q-1))\star(N\star 1)=(q-1)\star 1=q.$$
Starting from $(N-1)\star 1=N$ and  the relation

\begin{equation}\label{induction}
p\star(q+1)=p\star(q\star 1)=(p\star q)\star (p+1)
\end{equation} we can  use induction, descending on $p$ and ascending on $q$,  
to prove that $p\star q$ is well defined and satisfies  $N\geq p\star q>p$. 
\subsection{The Laver tables}
It turns out that the binary operation  $\star$, defined  above, is left distributive if and only if   
 $N=2^n$ for some $n\geq 0$. 
Left distributivity is the property that, for all $p,q,r$ one has
\begin{equation}
\label{ld0}p\star(q\star r)=(p\star q)\star(p\star r).
\end{equation}
Note that (\ref{ld0}) is 
  (\ref{ld})  with $1$ replaced by any $r\in [1,N]$.
The proof of this result is elementary, but non trivial, and can be found for example in the books \cite{Deh}, \cite{Deh1} or in the survey by A. Dr\'apal \cite{Dra3}. In the sequel  I will denote by $\star_n$ the operation on $[1,2^n]$ thus obtained.
Here  is the Laver table giving the values of $p\star_n q$, for $N=4$:
\medskip

\renewcommand{\arraystretch}{1.5}
\begin{center}
\begin{tabular}{c|cccc}
 $\star$&\bf 1 &\bf  2&\bf 3&\bf 4\\
\hline
{\bf 1}&  2&4&2&4
\\
{\bf 2}& 3&4&3&4
\\
{\bf 3}& 4 &4&4&4
\\
{\bf 4}&1 &2 &3& 4
\end{tabular}
\end{center}
and for $N=8$:

\begin{center}
\begin{tabular}{c|cccccccccc}
 $\star$&\bf 1 &\bf  2&\bf 3&\bf 4&\bf 5&\bf 6&\bf 7&\bf 8\\
\hline
{\bf 1}&  2&4&6&8&2&4&6&8
\\
{\bf 2}& 3&4&7&8&3&4&7&8
\\
{\bf 3}& 4 &8&4&8&4&8&4&8
\\
{\bf 4}& 5& 6 &7& 8&5&6&7&8
\\
{\bf 5}& 6& 8&6&8&6&8&6&8
\\
{\bf 6}& 7& 8& 7& 8&7&8&7&8
\\
{\bf 7}&8 &8 &8& 8&8&8&8&8
\\
{\bf 8}&1 &2 &3& 4&5&6&7&8
\end{tabular}
\end{center}
The $p$'s are in the first column and the $q$'s in the first row.
\subsection{Left distributive operations}\label{LD1}
Before going further into a combinatorial exploration  of the Laver tables, 
I will make some general remarks on left distributive operations. A thorough study of these, with many examples and applications, notably to knots and braids, can be found in the monograph by P. Dehornoy \cite{Deh}.
A binary operation $\star:S\times S\to S$ on a set $S$ is said to be {\sl left distributive} if it satisfies
(\ref{ld0}) for all $p,q,r\in S$.
 A good way to think about  property (\ref{ld0}) is to notice that the left multiplication operator by some element $s$, denoted by $\lambda_s$ i.e. $\lambda_s(t)=s\star t$, is a homomorphism of the structure: for all $p,q,r$ one has
\begin{equation}
\lambda_p(q\star r)=\lambda_p(q)\star\lambda_p( r)
\end{equation}
 thus $\lambda$ gives a map $S\to Hom(S,\star)$. However, in general, $\lambda_{p\star q}$ does not have an obvious relation to $\lambda_p$ and $\lambda_q$, in particular it is not equal to $\lambda_p\circ \lambda_q$!
A motivation for considering such a property comes from the study of the set of maps of a set to itself $f:X\to X$.
Composition of maps  gives a semigroup structure on this set. Identifying a function with its graph $G_f=\{(x,f(x)),x\in X\}$, the graph of the composition $f\circ g$ is
 $$(Id\times f)(G_g)=\{(x,f(g(x))),x\in X\}.$$ 
It is also possible to ``apply" the function $f$ to the graph $G_g$ to produce the set 
$$(f\times f)(G_g)=\{(f(x),f(g(x)),x\in X\}.$$ 
In general this is not the graph of a  function, unless $f$ is a bijection, in which case it is the graph of the function $f\circ g\circ f^{-1}$. It turns out that the operation $f
\star g=f\circ g\circ f^{-1}$ is left distributive in the following sense: if $f$ and $g$ are invertible then 
$f\star(g\star h)=(f\star g)\star(f\star h)$. Actually, a trivial computation shows that, in any group, the conjugation operation $f\star g=fgf^{-1}$ is left-distributive. It is interesting however, for reasons which will appear later, to enlarge the previous example in the following way:
for a set $X$  let $I_X$ be the set of partially defined injections  $f:D_f\subset X\to X$ where $D_f$ is the domain of definition of $f$, containing in particular ``the injection with empty domain". The set $I_X$ is a semigroup for the natural notion of composition (which may result in an injection with empty domain). Each such injection can be described by its graph $\{(x,f(x)),x\in D_f\}\subset X\times X$. If $g\in I_X$ then $(g\times g)(G_f)$ is the graph of a partially defined injection $g\star f$ and again it is easy to see that the operation $\star$ is left distributive and that $\lambda_f\lambda_g=\lambda_{f\circ g}$ where $f\circ g$ denotes the composition of partially defined injections. Observe that one has 
\begin{equation}\label{function}
f\star g(f(x))=f(g(x))
\end{equation}
 whenever the two members of  this equality are defined. This  serves as a substitute for the formula $f\star g=f\circ g\circ f^{-1}$. 
At this stage, a natural question is whether one can find a family $I$ (not reduced to the identity) of {\sl increasing, everywhere defined} injections  of $\bf N$ into itself, endowed with a binary left distributive operation $\star:I\times I\to I$, such that, for any $\iota,\eta\in I$ and $x\in \bf N$ one has
$\iota\star\eta(\iota(x))=\iota(\eta(x))$, as in (\ref{function}). It turns out that this is a highly nontrivial question to which, 
as we shall see in section
\ref{interest}, the Laver tables give a surprising answer.

\section{Basic properties of the Laver tables}\label{basic}
We now come back to the Laver tables constructed in section \ref{laverdef} and describe some of their elementary properties. 
\subsection{Periods and  projective  limits}
The following properties of the operation $\star_n$ are easily established by induction, see e.g. \cite{Deh} (as above we put $N=2^n$).
\begin{itemize}

\item
 For all $p\in [1,N]$ one has  $N\star_np=p$ and  $p\star_nN=N$ 

\medskip

\item For every $p\in[1,N]$ the sequence 
$p\star_nq,\ q=1,2,\dots$ is periodic, with period  $\pi_n(p)$, a power of  $2$, and 
the sequence
$p\star_nq,\ q=1,2,\dots, \pi(p)$ is strictly increasing from  $p\star_n 1=p+1$ to $p\star_n\pi(p)=N$. 

\medskip

\item\label{modulo}
The projection  $\Pi_{N}:[1,2N]\to[1,N]$ modulo $N$ is a homomorphism.

\medskip
\item
For all $p,q$ one has
\begin{equation}\label{puissances}
p\star_nq=(p+1)^{( q)}
\end{equation}
where the {\sl left  powers\footnote{One can similarly define right powers but we will not use them.}} $x^{(k)}$ are defined by $x^{(1)}=x,\,x^{(k+1)}=x^{(k)}\star_n x$. This follows at once from (\ref{induction}).
\end{itemize}
In order to illustrate these  properties let us display again the Laver table of size $8$. 
In the last column we show the period of each row and
we  divide the table
into four squares. It is then immediate to check that each of these squares  is equal, modulo $4$,  to the table of order $4$.
$$\begin{array}{c|cccc|cccc|cl}
 \star&\bf 1 &\bf  2&\bf 3&\bf 4&\bf 5&\bf 6&\bf 7&\bf 8&\pi\\
\hline
{\bf 1}&  2&4&6&8&2&4&6&8&4
\\
{\bf 2}& 3&4&7&8&3&4&7&8&4
\\
{\bf 3}& 4 &8&4&8&4&8&4&8&2
\\
{\bf 4}& 5& 6 &7& 8&5&6&7&8&4
\\
\cline{1-9}
{\bf 5}& 6& 8&6&8&6&8&6&8&2
\\
{\bf 6}& 7& 8& 7& 8&7&8&7&8&2
\\
{\bf 7}&8 &8 &8& 8&8&8&8&8&1
\\
{\bf 8}&1 &2 &3& 4&5&6&7&8&8
\end{array}
$$
It is possible to take a projective limit of the Laver tables with respect to the natural projections $\Pi_{n,m}:[1,2^n]\to[1,2^m],\ n>m$ and obtain a left distributive operation on the set of 2-adic integers. This left distributive system is generated by $1$.
\subsection{Generators and relations}
Consider the free system with one generator for the left distributive operation (\ref{ld0}) namely, denoting the generator by $1$, it consists of all well parenthesized expressions in $1,\star$, like $1,1\star 1,(1\star 1)\star 1,1\star(1\star 1)$, etc. equipped with the operation $\star$, modulo the congruence induced by the relation (\ref{ld0}). The Laver table of order $2^n$, as  a left distributive system with one generator, satisfies supplementary relations, for example one has 
\begin{equation}\label{relation}
1^{(2^n+1)}=1.
\end{equation}
  In fact one can show that the Laver table of order $2^n$ is exactly the left distributive system with one generator $1$ and the relation (\ref{relation}), see e.g. \cite{Deh}, \cite{Weh}. A very deep question is whether the projective limit of Laver tables is a free system. We will say more about this in section \ref{interest}.
\subsection{Homomorphisms and semigroup structure}
As we have remarked, for any $p$ the left multiplication by $p$ is a homomorphism for the operation $\star_n$. More generally, if $p\in [1,2^n]$ has period $\pi_n(p)\leq 2^m$ then the map $q\mapsto p\star_n q$ is a homomorphism from $[1,2^m]$ to $[1,2^n]$ (with respect to their respective operations $\star_m,\star_n$).  Conversely, for any homomorphism $\varphi:[1,2^m]\to [1,2^n]$ one has, using  (\ref{puissances}):
$$\varphi(q)=\varphi(1^{(q)})=(\varphi(1))^{(q)}=p\star_nq$$
with $p=\varphi(1)-1\mod 2^n$ so that $\varphi$ is given by left multiplication by $p$.
Since  composition of homomorphisms is a homomorphism, for any $p,q\in [1,N]$, there exists a unique $s$, denoted $p\circ_n q$, such that $\lambda_s=\lambda_p\circ \lambda_q$   or, equivalently, 
\begin{equation}\label{compose}
p\star_n(q\star_nr)=s\star_nr\quad \text{for all $r$.}
\end{equation}
  Using equation (\ref{compose}) for $r=1$ we see that 
$p\circ_n q$ is characterized by the relation
\begin{equation}\label{composeind}
(p\circ_n q)+1=p\star_n(q+1)\mod N
\end{equation} 
which relates $\star_n$ and $\circ_n$.
The product $\circ_n$ is  associative and gives a semigroup operation on $[1,N]$, in fact the map $\lambda$ gives an isomorphism 
 $([1,N],\circ_n)\sim \text{End}([1,N],\star_n)$. One can check that $\star_n$ and $\circ_n$  satisfy the properties:

\begin{align}
p\star_n(q\circ_n r)&=(p\star_nq)\circ_n(p\star_n r)\label{01}\\
(p\circ_n q)\star_n r&=p\star_n(q\star_n r)\\
(p\star_n q)\circ_n p&=p\circ_n q.\label{03}
\end{align}

These relations might seem less strange if one observes that they are also satisfied by any pair $\circ,\star$ where $\circ$ is a group operation and $\star$ the associated conjugation operation, $a\star b=a\circ b\circ a^{-1}$, as in section \ref{LD1}.
\subsection{Backwards notation}
We have seen that the natural projection $$\Pi_N:[1,2N]\to~ [1,N]$$ is a homomorphism.
The embedding: 
$$
\begin{array}{l}
\iota_N:([1,N],\star_n)\to([1,2N],\star_{n+1})\\
 p\mapsto p+N=N\star_{n+1} p
\end{array}
$$
is  also a homomomorphism. This implies that,   
for nonnegative  integers $  p,q,$  the value of
\begin{equation}\label{*}
p* q:=N-(N-p)\star_n(N-q)
\end{equation} 
 does not depend on $N$, as long as $p,q< N=2^n$. One can therefore take an inductive limit with respect to the embeddings $\iota_N$ and build an infinite  table giving the values of  $p* q$ for nonnegative integers $p$ and $q$. 
The set of nonnegative integers is thus endowed with a left distributive operation $*$.
The properties of $\star_n$ immediately translate into the following properties of $*$:
\begin{itemize}

\item  $0*p=p$ and $p*0=0$ for all $p\geq 0$.
\item For every $p>0$ the sequence 
$p* q,\ q=0,1,2\dots$ is periodic, with period  $\pi(p)$, a power of  $2$.
\item
$p* q,\ q=0,1,2,\dots ,\pi(p)-1$ is strictly increasing and  $p*(\pi(p)-1)=p-1$.
\item For all  $p,q,r,n$:  one has\begin{equation}\label{autodist}p*(q*r)=(p*q)*(p*r)\end{equation}
\begin{equation}\label{modulo2}(p\mod 2^n)*(q\mod 2^n)=p*q\mod 2^n\end{equation}
\item The formula $p\circ q=p*(q-1)+1$ defines a semigroup structure on the set of positive integers.
\item One has 
\begin{equation}\label{puissancesback}
p*(2^n-q)=(p-1)^{(q)}\qquad\text{for $2^n\geq\pi(p)$ }\end{equation}
(with a suitable definition of the left powers).
\end{itemize}

Equations (\ref{01})$\cdots$(\ref{03}) also hold for $*$ and $\circ$.
 It follows in particular that for every $m$ the interval 
$[0,m]$ is closed under $*$ and $[1,m]$ is closed under $\circ$. Since the operations $\star_n$ or $*$ are related by the map $p\mapsto N-p$ they are equivalent but, depending on the aspects of Laver tables  one wants to consider, often one of them turns out to be more convenient than the other. 
\subsection{Computation of the Laver table}
We saw in section \ref{laverdef} how to  compute  the products $p\star_n q$ by induction. For the convenience of the reader 
I will illustrate here the computation of the operation  $*$, which gives the inductive structure and which, of course, is equivalent to the computation of the operations $\star_n$.
The Laver table recording the  $p*q$ can be contructed by induction on the rows. If one knows  rows from $0$ to $p-1$ the  row of $p$ is obtained as follows:
for  $n$ large enough (i.e. $2^n>p$) one has
\begin{equation}\label{2n-1}p*(2^n-1)=p-1\end{equation}
 then 
\begin{equation}\label{2n-2}p*(2^n-2)=p*((2^n-1)*(2^n-1))=(p*(2^n-1))*(p*(2^n-1))=(p-1)*(p-1)\end{equation}
and, by induction on $k$, using $2^n-k-1=(2^n-k)*(2^n-1)$ and (\ref{ld0}):
\begin{equation}\label{recurrence}
p*(2^n-k-1)=(p*(2^n-k))*(p-1).
\end{equation}
By the periodicity properties of the Laver tables, one has  $p*(2^n-2^m)=0$ for some  $m< n$ so that  the period of  $p$ is $2^m$. Once this value is reached the row is completed by periodicity.  As an example we  compute the row of $7$ assuming rows from $0$ to $6$ have been computed (all rows between $1$ and $6$ have period $\leq 4$):
$$\begin{array}{ccccccccccc}
 &\bf 0 &\bf 1&\bf  2&\bf 3&\bf 4&\bf 5&\bf 6&\bf 7&\bf 8&\\
{\bf 0}&  0&1&2&3&4&5&6&7&8\\
{\bf 1}&  0&0&0&0&0&0&0&0&0
\\
{\bf 2}& 0&1&0&1&0&1&{\boxed 0}&1&0
\\
{\bf 3}& 0 &2&0&2&0&2&0&2&0
\\
{\bf 4}& 0& 1 &2& 3&0&1&{\boxed  2}&3&0
\\
{\bf 5}& 0& 4&0&4&0&4&0&4&0
\\
{\bf 6}& 0& 1& 4& 5&0&1&{\boxed  4}&5&0
\\
{\bf 7}& 0&2 &4 &6 &{\boxed  0}&{\boxed 2}&{\boxed 4}&{\boxed 6}&0
\end{array}
$$
In order to get row 7  one takes  $n=3$, then $2^n-1=7$ and by (\ref{2n-1}) one has $7*7=6$. Applying  (\ref{2n-2}) gives 
$7*6=6*6=4$, then by repeatedly applying (\ref{recurrence}) we get $7*5=4*6=2$, $7*4=2*6=0$. The rest of the row follows by periodicity. The relevant values are shown in boxes in the above table.

It is easy to make a computer program which performs these computations for larger values of $p$ and $q$, however one encounters quickly memory size problems. We will see in section \ref{seuils} how to encode the Laver tables in a more compact form. A  formula expressing $p*q$ is known for some classes of $p$'s (for example if $p$ is a power of $2$, see section \ref{rows} below) but no formula is known  in the general case and it is likely that  no such formula exists.

\section{Where do Laver tables come from?}\label{comefrom}
The Laver tables are finite combinatorial objects with a very simple and elementary definition, however they were discovered in the context of the theory of large cardinals, a part of mathematics which seems quite far from finite combinatorics. Although logic and set theory are not in my domain of expertise, I will try to convey some idea of the set theoretical objects involved in this construction, without giving complete definitions, and refer to the books \cite{Deh}, \cite{Deh1} for a thorough discussion. I assume here only a very basic knowledge of ordinals.

  Recall that a set $X$ is infinite if there exists an injection $j:X\to X$ which is not surjective. The typical example is the map $j:x\to x+1$ on the set $\bf N$ of natural numbers. We will need also the notion of  an elementary embedding of a structure $X$ into itself, which  is an injective map $j:X\to X$ such that any formula in the language of $X$ is true if and only if its image by $j$ is true.
We consider now ordinals. Recall that ordinals are totally ordered $$0<1<2<\ldots<\omega<\omega+1<\ldots.$$ Any ordinal $\lambda$ has a successor $\lambda+1$, but some ordinals, like $\omega$ the first infinite ordinal,   are not the successor of any ordinal.
They are called {\sl limit ordinals}. To ordinals one can associate {\sl ranks} $V_\lambda$ which  are sets  defined by induction starting with $V_0=\emptyset$ and satisfying  $V_{\lambda+1}=2^{V_\lambda}$ and $V_\lambda=\cup_{\mu<\lambda}V_\mu$ if $\lambda$ is a limit ordinal.
The rank $V_\lambda$ is equipped
with the language of first order set theoretic formulas. Laver postulated the existence of a limit ordinal $\lambda$ and a nontrivial elementary embedding $j$ of $V_\lambda$ into itself. Consideration of such an object is a natural extension of the idea of an infinite set: a set is infinite if it  is ``isomorphic" to one of its subsets without being equal to it, except that here the notion of ``isomorphic" is taken in a very strict sense, that of ``satisfying the same first order properties" which is much stronger  than just ``being in bijection with".

The image by $j$ of any formula 
 which is true in $V_\lambda$ remains true. It follows that all ``ordinary sets" i.e. the ones which can be constructed from the empty set by using von Neumann construction of taking subsets, power sets, unions, etc.,  which form the universe in which almost all ordinary mathematics is done, are invariant under $j$, moreover $j$ is monotonous. Since $j$ is assumed to be nontrivial there must exist a smallest ordinal $\kappa<\lambda$ such that $j(\kappa)>\kappa$, this ordinal is called the {\sl critical ordinal} of $j$.
By elaborating on the remark above one can see that $\kappa$ has to be inaccessible, so that, for example,  for every ordinal $\mu<\kappa$ one has $2^\mu<\kappa$. The ordinals $\kappa$ and $\lambda$ have therefore to be very large.
 The existence of such an object cannot be proved in the usual axiomatic system of ZFC,  it has to be introduced by adding a new axiom. Once the existence of $j$ is granted by this new axiom, one can construct new elementary embeddings: an obvious way is to compose $j$ with itself, but the structure of set theory allows also another construction, reminiscent of what we saw in section \ref{LD1}:
given an elementary embedding $l$ of $V_\lambda$, its restriction to some $V_\mu$ with $\mu<\lambda$ is itself a set in $V_\lambda$ (it can be identified to its graph as a function) and one can apply another elementary embedding $k$ to this set. Taking inductive limits over $\mu<\lambda$ one gets an elementary embedding $k\star l$. This gives a new operation on elementary embeddings. One can prove that this operation is left distributive (essentially for the reason explained in section \ref{LD1}).
Take now $J$ to be the set of all elementary embeddings obtained from $j$ by using $\star$ (e.g. $j,j\star j,(j\star j)\star j, etc.$) then every such elementary embedding is nontrivial and has a critical ordinal. One can prove that these ordinals form an increasing sequence $\kappa_0<\kappa_1<\kappa_2<\ldots$, moreover one can define a notion of equivalence modulo $\kappa_n$ on elementary embeddings so that the Laver table of order $2^n$ is obtained from $J$ by taking the quotient with respect to this equivalence relation, with $j$ corresponding to $1$ and $\star$ giving $\star_n$. The composition of elementary embeddings gives another operation, which yields $\circ_n$ by passing to the quotient. The details are somewhat technical and can be found in \cite{Deh}.
Thus we see that Laver tables, which are  finite combinatorial objects, have been discovered from quite elaborate considerations, involving logic and set theory of large cardinals! 

\section{Some further properties of Laver table.}\label{further}
We now describe some more involved properties of Laver tables. Other  results of this kind can be found in the works of Dr\'apal and Dougherty mentioned in the bibliography.
Here we use the operation $*$ for which many of the properties  are more easily stated but of course equivalent statement can be obtained for the operations $\star_n$.
We will use the following notation:
if  $p$ is a positive integer we denote   $\nu_i(p)$ (with $\nu_1<\nu_2<\ldots$) the powers of 2 arising in the binary expansion  of   $p$, i.e. $p=\sum_{i=1}^r2^{\nu_i(p)}$.

\subsection{Adding a power of $2$}

For all $p,q,n$ with $0<p<2^n$ it follows from (\ref{modulo2}) that $(p+2^n)*q=p*q\ \text{or}\ p*q+2^n$ 
\begin{proposition}\label{power1}Let $p$ be such that  $0<p<2^m<2^n$ and $q$ a nonnegative integer, then  
\begin{equation}\label{nm}(p+2^m)*q=p*q+2^m\ \text{if and only if}\ 
(p+2^n)*q=p*q+2^n.\end{equation}
\end{proposition}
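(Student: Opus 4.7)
The plan is to establish a dichotomy, reduce to an adjacent case, and close the proof by an induction that passes through the left powers of~(\ref{puissancesback}).

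Applying (\ref{modulo2}) with modulus $2^k$ gives $(p+2^k)*q \equiv p*q \pmod{2^k}$ whenever $0 < p < 2^k$; combined with $(p+2^k)*q \leq p + 2^k - 1$ and $p*q < p < 2^k$, this forces $(p+2^k)*q - p*q \in \{0, 2^k\}$. Write the difference as $\epsilon_k(p,q)\cdot 2^k$ with $\epsilon_k(p,q) \in \{0,1\}$; the proposition is the equality $\epsilon_m(p,q) = \epsilon_n(p,q)$, and by chaining it suffices to prove the adjacent case $\epsilon_k(p,q) = \epsilon_{k+1}(p,q)$ for every $0 < p < 2^k$ and $q \geq 0$.

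For this, I would pass to left powers. By (\ref{puissancesback}), taking $2^N$ large, one translates $\epsilon_k(p,Q)$ into a bit $\tilde\alpha_k(x,q) \in \{0,1\}$ defined by $(x+2^k)^{(q)} = x^{(q)} + \tilde\alpha_k(x,q)\cdot 2^k$, with $x = p-1$, via the relation $\epsilon_k(p,Q) = \tilde\alpha_k(p-1, 2^N - Q)$. The key auxiliary identity is
\[
(a+2^k)*(b+2^k) = (a+2^k)*b \qquad \text{for } 0\leq a, b < 2^k,
\]
which follows from $b = 2^k*(b+2^k)$ (using $2^k*y = y \bmod 2^k$), left distributivity, and $(a+2^k)*2^k = 0$ (by periodicity, since $\pi(a+2^k) \mid 2^k$). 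Unfolding $(x+2^k)^{(q+1)} = (x+2^k)^{(q)} * (x+2^k)$ and applying this identity in the nontrivial case gives a recurrence for $\tilde\alpha_k(x,q+1)$ in terms of $\tilde\alpha_k(x,q)$, the sign of $x^{(q)}$, and $\epsilon_k(x^{(q)},x)$; crucially, the dependence on $k$ enters only through the last bit, and only when $\tilde\alpha_k(x,q) = 1$.

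The adjacent case then follows by strong induction on $x$ (with an inner induction on $q$). The base $x = 0$ is handled by direct computation ($\tilde\alpha_k(0,q)$ alternates $1,0,1,0,\dots$ independent of $k$, since $(2^k)^{(q)}$ alternates $2^k, 0, 2^k, 0, \dots$). In the inductive step, the only branch requiring input from the hypothesis is $\epsilon_k(x^{(q)}, x) = \epsilon_{k+1}(x^{(q)}, x)$; the case $x^{(q)} = 0$ is immediate ($\epsilon_k(0,x) = 0$ for $x < 2^k$, since $2^k*x = x$), while for $x^{(q)} > 0$ the relation $\epsilon_k(p,Q) = \tilde\alpha_k(p-1, 2^N-Q)$ reduces it to $\tilde\alpha_k(x^{(q)}-1, \cdot) = \tilde\alpha_{k+1}(x^{(q)}-1, \cdot)$, which is the induction hypothesis since $x^{(q)}-1 < x$. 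I expect the main obstacle to be identifying the right inductive framework: a naive induction on $n - m$ for fixed $p$ does not close, because the adjacent step at $(p,q)$ involves the bits $\epsilon_k(y,x)$ with $y = x^{(q)}$ possibly equal to $x$; the change of variable to $x = p-1$ via left powers is precisely what makes $x^{(q)} - 1 < x$ the strictly decreasing quantity that drives the induction.
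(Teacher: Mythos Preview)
Your approach is essentially the same as the paper's, just unpacked in more detail. The paper's proof is terse: it verifies the case $p=1$ directly, then observes that both $\bar p=p+2^m$ and $\tilde p=p+2^n$ satisfy the identical recurrence
\[
\bar p*(2^l-k)=(\bar p*(2^l-k+1))*(p-1),\qquad \tilde p*(2^l-k)=(\tilde p*(2^l-k+1))*(p-1),
\]
with the \emph{same} right factor $p-1$ (this is exactly your key identity $(a+2^k)*(b+2^k)=(a+2^k)*b$, i.e.\ $\pi(a+2^k)\mid 2^k$), and concludes by a double induction on $p$ and $k$ using $u*v<u$. Your strong induction on $x=p-1$ with inner induction on the left-power index $q$ is this same double induction, rewritten via (\ref{puissancesback}); your base case $x=0$ is the paper's $p=1$; and your crucial inequality $x^{(q)}-1<x$ is the paper's ``$u*v<u$''. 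The reduction to the adjacent case $k\to k+1$ is an extra step that the paper skips --- its argument works uniformly for any pair $m<n$ --- but it does no harm.
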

\begin{proof}
It is easy to prove the statement for $p=1$, indeed one has $(1+2^n)*q=0$ if $q$ is even and $=2^n$ if $q$ is odd, see section \ref{2m2n} below. Let now
   $\bar p=p+2^m$ with $0\leq p<2^m$ then $\pi(\bar p)\leq 2^m$ therefore for  $l$ large enough one has, using (\ref{recurrence}),
$$\bar p*(2^l-k)=(\bar p*(2^l-k+1))*( p-1)$$ and similarly for $\tilde p=p+2^n$:
$$\tilde p*(2^l-k)=(\tilde  p*(2^l-k+1))*( p-1)$$  The relation is then deduced by a double induction on  $p$ and $k$, noting that one has always $u*v<u$. 
\end{proof}

\medskip
In particular one can define, for every $p$, its ``coperiod" by $\bar \pi(p)=\pi(p+2^n)$, which does not depend on $n$ if $n$ is large enough. One has $\bar\pi(p)/\pi(p)=1\ \text{or}\ 2$.
One can go further and note that, for $p<2^m<2^n$, one has $$(p+2^m+2^n)*q=p*q+i2^m+j2^n$$
 for some  $i,j\in\{0,1\}$.
By an argument  analogous to the one in the above proof one gets easily
\begin{proposition}\label{power2}Let $p,q,m,n,s,t$ be integers such that $p<2^{m-1}$, $m<n$ and $p<2^{s-1}$, $s<t$ 
then for  $i,j\in\{0,1\}$ one has:
 \begin{equation}\label{nmst}(p+2^m+2^n)*q=p*q+i2^m+j2^n\ \text{if and only if}\ 
(p+2^s+2^t)*q=p*q+i2^s+j2^t.\end{equation}
\end{proposition}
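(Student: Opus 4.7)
The plan is to mirror the proof of Proposition \ref{power1}, now tracking two high-order bits in parallel. Fix $l$ large enough that $2^l$ dominates every relevant quantity, and set $\bar p = p + 2^m + 2^n$, $\tilde p = p + 2^s + 2^t$. I would proceed by outer induction on $p$, with inner induction on $k$ descending via the recurrence (\ref{recurrence}): $\bar p * (2^l - k - 1) = (\bar p * (2^l - k)) * (\bar p - 1)$, and similarly for $\tilde p$.

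The inner base case $k = 1$ reads $\bar p * (2^l - 1) = \bar p - 1 = (p - 1) + 2^m + 2^n$ when $p \geq 1$, giving the decomposition $(i, j) = (1, 1)$, and the same for $\tilde p$. For the inner inductive step, using $\bar p - 1 = (p-1) + 2^m + 2^n$ and the inner induction hypothesis that the decomposition structure of $\bar p * (2^l - k)$ matches that of $\tilde p * (2^l - k)$, the task reduces to an auxiliary lemma: for $X, Y < p$ and $a, b, c, d \in \{0,1\}$,
\[
(X + a 2^m + b 2^n) * (Y + c 2^m + d 2^n) = X*Y + V 2^m + W 2^n
\iff
(X + a 2^s + b 2^t) * (Y + c 2^s + d 2^t) = X*Y + V 2^s + W 2^t,
\]
with the same $V, W \in \{0,1\}$. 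Since $X, Y < p$, this auxiliary lemma falls within the scope of the outer induction hypothesis, and one establishes it by further unwinding the left factor via (\ref{recurrence}), using the inequality $u * v < u$ to guarantee well-foundedness.

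The base case $p = 0$ of the outer induction is delicate because $\bar p - 1$ does not take the form $(p-1) + 2^m + 2^n$; here one invokes Proposition \ref{power1} with base $2^m$ to write $(2^m + 2^n) * q = 2^m * q + \delta(2^m, q) 2^n$, and uses the explicit description of rows indexed by powers of $2$ (cf.\ section \ref{rows}) to verify the proposition directly. The main obstacle is the careful bookkeeping of whether the decomposition $(i, j)$ exists at each step --- it can fail for certain $q$, since $\bar p * q - p * q$ need not be nonnegative --- together with the verification that every elementary operation used (applications of (\ref{modulo2}), invocations of the recurrence, bit extractions) is insensitive to the substitution $m \mapsto s$, $n \mapsto t$. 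Once these compatibilities are systematically checked, the coefficients $(i, j)$ align across the two scalings, yielding the proposition.
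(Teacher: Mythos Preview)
Your plan is exactly the argument the paper has in mind: it offers no separate proof, only the remark ``by an argument analogous to the one in the above proof'', i.e.\ the double induction of Proposition~\ref{power1}, and you have correctly spelled out that scheme with two tracked bits instead of one.

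Two small repairs. First, the outer base case must be $p=1$, not $p=0$: for $p=0$ the decomposition $(i,j)$ genuinely fails to exist (e.g.\ $(2^m+2^n)*2^m=2^n$ while $0*2^m=2^m$, and $2^n-2^m\ne i2^m+j2^n$ for $n>m+1$), so the biconditional is ill-posed there; note Proposition~\ref{power1} also starts its induction at $p=1$. Second, your auxiliary lemma does not need ``further unwinding of the left factor'': the point is rather that the left factor $X'=X+a2^m+b2^n$ has period dividing $2^m$ (since $X<p<2^{m-1}$ forces $\pi(X)\le 2^{m-2}$, hence $\pi(X')\le 4\pi(X)\le 2^m$), so the high bits of the right argument disappear by periodicity and one is reduced to $X'*(p-1)$, to which Proposition~\ref{power1} (if exactly one of $a,b$ is $1$) or the outer induction hypothesis (if $a=b=1$) applies directly. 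Your worry about the decomposition $(i,j)$ failing to exist is unfounded once $p\ge 1$: the relation $\bar p*q\sqsubset \bar p-1=(p-1)+2^m+2^n$ from (\ref{binaire}), together with $\bar p*q\equiv p*q\pmod{2^m}$, forces $\bar p*q=p*q+i2^m+j2^n$ for unique $i,j\in\{0,1\}$.
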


Note that the conditions  $p<2^{m-1}$ and $p<2^{s-1}$ are necessary. In general one can prove, using similar arguments: 
\begin{proposition}\label{power3}
If $1\leq p<2^m<2^{n-1}$ then 
\begin{equation}
\pi(p+2^{m+1}+2^n)\leq \pi(p+2^m+2^n)\leq 2\pi(p+2^{m+1}+2^n).
\end{equation}
\end{proposition}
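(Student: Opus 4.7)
I would begin by writing $c = p+2^m$ and $d = p+2^{m+1}$, so that $a = c+2^n$ and $b = d+2^n$. Since $c, d < 2^{m+2} \leq 2^n$, the projection modulo $2^n$ (item \ref{modulo}) is a homomorphism and allows me to write $a*q = c*q + \chi^a(q)\,2^n$ and $b*q = d*q + \chi^b(q)\,2^n$ with $\chi^a(q), \chi^b(q) \in \{0,1\}$.

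The first key step is to establish $\pi(c) = \pi(d) =: \pi_0$. For this, I apply Proposition \ref{power1} to the triple $(p, m, m+1)$, which is valid because $0 < p < 2^m < 2^{m+1}$, obtaining the equivalence $c*q = p*q + 2^m \iff d*q = p*q + 2^{m+1}$. Since $c*q \in \{p*q, p*q+2^m\}$ and $d*q \in \{p*q, p*q+2^{m+1}\}$, the two carry indicators coincide as functions of $q$, and in particular $c*q = 0 \iff d*q = 0$. Then, invoking the coperiod fact $\pi(x+2^n)/\pi(x) \in \{1,2\}$ (for $x < 2^n$, recalled after Proposition \ref{power1}) with $x = c$ and $x = d$, I conclude $\pi(a), \pi(b) \in \{\pi_0, 2\pi_0\}$. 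Since $\pi(b) \geq \pi_0$, this already delivers the right-hand inequality $\pi(a) \leq 2\pi_0 \leq 2\pi(b)$.

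The left-hand inequality $\pi(b) \leq \pi(a)$ is the crux. What remains is to exclude the case $\pi(a) = \pi_0 < \pi(b) = 2\pi_0$, which amounts to the implication $\chi^a(\pi_0) = 0 \Rightarrow \chi^b(\pi_0) = 0$. My plan is to mimic the proof of Proposition \ref{power1}: choose $l$ large enough that $2^l$ exceeds all relevant periods, and work backwards from $q = 2^l - 1$ using (\ref{recurrence}) simultaneously on $a$ and $b$. The structural observation driving the argument is that $a - 1 = (p-1)+2^m+2^n$ and $b - 1 = (p-1)+2^{m+1}+2^n$ are of the same shape as $a, b$ but with $p$ replaced by $p - 1$, so there is a natural outer induction on $p$; the inner induction on $k$ (with $q = 2^l - k$) uses $a*(2^l-k-1) = (a*(2^l-k))*(a-1)$ and its analogue for $b$, while the bound $u*v < u$ keeps the carries under control. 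The base case $p = 1$ can be handled using the explicit description $(1+2^n)*q \in \{0, 2^n\}$ according to the parity of $q$ already recalled in the proof of Proposition \ref{power1}.

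The principal obstacle is the regime $2^{m-1} \leq p < 2^m$, in which Proposition \ref{power2} cannot be invoked to equate directly the intrinsic carry functions of $c$ and $d$ at the $2^n$-bit. In this range, I expect to rely on the induction hypothesis at $p-1$ to control how the carries at bits $2^m$ and $2^{m+1}$ interact with the carry at bit $2^n$ through the recursion, and to verify that the required implication $\chi^a(\pi_0) = 0 \Rightarrow \chi^b(\pi_0) = 0$ is preserved by the inductive step.
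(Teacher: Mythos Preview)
The paper does not actually prove this proposition: after Proposition~\ref{power2} it simply states that Proposition~\ref{power3} follows ``using similar arguments''. Your outline is precisely in that spirit, and your derivation of the right-hand inequality --- via $\pi(c)=\pi(d)$ from Proposition~\ref{power1} together with the coperiod dichotomy $\bar\pi/\pi\in\{1,2\}$ --- is correct and clean.

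Two points on the left-hand inequality deserve comment. First, a minor one: your base case $p=1$ is misdescribed. The relevant elements are the three-bit numbers $1+2^m+2^n$ and $1+2^{m+1}+2^n$, not $1+2^n$; one needs a direct computation (or, later in the paper, Proposition~\ref{row3}) rather than the two-bit parity formula you cite. Second, and more substantively: in the inner recursion $a*(2^l-k-1)=(a*(2^l-k))*(a-1)$, the left factor is in general $p'+\varepsilon\,2^m+\chi^a\,2^n$ with $p'=p*(2^l-k)$, so the outer induction must be a \emph{strong} induction over all $p'<p$, not just on $p-1$. More delicately, after reducing $a-1$ and $b-1$ modulo the periods of $a_{p'}=p'+2^m+2^n$ and $b_{p'}=p'+2^{m+1}+2^n$, one ends up comparing $\chi^{a_{p'}}$ at the argument $(p-1)+2^m$ against $\chi^{b_{p'}}$ at the argument $(p-1)+2^{m+1}$; these are \emph{different} inputs, so the pointwise invariant ``$\chi^b(q)\le\chi^a(q)$'' does not close the induction by itself. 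One must additionally use that $\pi(c_{p'})=\pi(d_{p'})$ divides $2^m$ and handle the boundary case $\pi(c_{p'})=2^m$ separately. You have correctly located the obstacle in the range $2^{m-1}\le p<2^m$, but the step you describe as ``verify that the required implication is preserved'' is where the actual content lies --- content that neither you nor the paper writes out.
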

However one may have $\pi(p+2^m+2^n)=2\pi(p+2^{m+1}+2^n)$, for  example taking  $p=5,m=3,n=5$ one has
$\pi(45)=8,\ \pi(53)=4$.
\subsection{The threshold}\label{seuils}
Let $p$ be a nonnegative integer.
Consider the increasing sequence  $p*1,p*2,\ldots ,p*(\pi(p)-1)$. If  $p\leq2^n$ and $q=p+2^n$ then there are two possibilities for 
 $q*1,\ldots,q*(\pi(q)-1)$:

\begin{itemize}\item
either $\pi(q)=2\pi(p)$ and
$$\begin{aligned}
q*r&=p*r\quad \text{for}\ r<\pi(p)\\
q*r&=p*(r-\pi(p))+2^n\quad \text{for}\ \pi(p)\leq r< 2\pi(p)
\end{aligned}$$
\item or  $\pi(q)=\pi(p)$ and there exists some integer $\theta(q)\geq 1$ such that
$$\begin{aligned}
q*r&=p*r\quad \text{for}\ r<\pi(p)-\theta(p)
\\q*r&=p*r+2^n\quad \text{for}\ \pi(p)-\theta(p)\leq r< \pi(p)
\end{aligned}$$ 

\end{itemize}
One defines $\theta(q)=\pi(p)$ in the first case.
The integer $\theta(q)$ is called the threshold of $q$. 
In order to build a row of the Laver table $p*q;q=0,1,\ldots$ for some integer $p$, it is enough to know the thresholds of the numbers $p_j=\sum_{i=1}^j2^{\nu_i(p)}, \, j=1,2,\ldots$. In particular, knowing the sequence of numbers $\theta(p);p=2,3,\ldots$ allows to reconstruct the whole Laver table. This is especially useful for doing computer experiments since, instead of storing all products of the Laver table, one can just  store the sequence of thresholds, which saves a lot of memory space.
As an example we illustrate how to compute the row $p*q$ for $p=494=2^1+2^2+2^3+2^5+2^6+2^7+2^8$ (the sequence  $\nu_i(p)$ is $1,2,3,5,6,7,8$). The table below gives the numbers $p_i$ in the first column,  their thresholds in the second column and, in the last column, the row $p_i*q;\ q=0,1,2,\ldots$, up to the period, with the last $\theta(p)$ numbers underlined in bold characters.
$$
\begin{array}{ccl}
p_i&\theta&p_i*q\\
2&1&0,{\bf\underline 1}\\
6&2&0,1,{\bf\underline 4,\bf\underline 5}\\
14&1&0,1,4,\bf\underline {13}\\
46&4&0,1,4,13,\bf\underline {32}, \bf\underline {33},\bf\underline {36}, \bf\underline {45}\\
110&3&0,1,4,13,32,\bf\underline{97},\bf\underline{100},\bf\underline{109}\\
238&3&0,1,4,13,32,\bf\underline{225},\bf\underline{228},\bf\underline{237}\\
494&8&0,1,4,13,32,225,228,237,\bf\underline{ 256},\bf\underline{257},\bf\underline{260},
\bf\underline{269},\bf\underline{288},\bf\underline{481},\bf\underline{484},\bf\underline{493}
\end{array}
$$

\begin{proposition}
If $p<2^m$ and $m<n$ then $\theta(p+2^m)=\theta(p+2^n)$.
\end{proposition}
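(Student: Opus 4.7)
The plan is to reduce everything to Proposition \ref{power1}. That proposition guarantees that the ``correction pattern'' --- the set of positions $q$ at which $2^m$ is added in the row of $p+2^m$ --- is literally the same, as a subset of $\mathbf{N}$, as the set of positions at which $2^n$ is added in the row of $p+2^n$. Since the threshold is defined purely in terms of this pattern on the interval $[0,\pi(p))$, the two thresholds have to agree.

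First I would fix $p<2^m<2^n$, write $\pi=\pi(p)$, and introduce a single indicator $\epsilon(q)\in\{0,1\}$ by
$$(p+2^m)*q = p*q + \epsilon(q)\cdot 2^m.$$
This is well defined because $[0,p]$ is closed under $*$, forcing $p*q\le p<2^m$, so the correction term lies in $\{0,2^m\}$. Proposition \ref{power1} then automatically gives, with the \emph{same} function $\epsilon$,
$$(p+2^n)*q = p*q + \epsilon(q)\cdot 2^n.$$

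Next I would inspect $\epsilon(\pi)$ and split into the two branches of the dichotomy stated just before the proposition. If $\epsilon(\pi)=1$, then $(p+2^m)*\pi = 2^m\ne 0$, so $\pi(p+2^m)>\pi$; the dichotomy forces $\pi(p+2^m)=2\pi$, and by definition $\theta(p+2^m)=\pi$. The identical reasoning applied to $p+2^n$, with the same value $\epsilon(\pi)=1$, yields $\pi(p+2^n)=2\pi$ and $\theta(p+2^n)=\pi$. If instead $\epsilon(\pi)=0$, then $(p+2^m)*\pi=0$; combined with the fact that $(p+2^m)*r=0$ forces $p*r=0$ and hence $\pi\mid\pi(p+2^m)$, we conclude $\pi(p+2^m)=\pi$. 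The second branch of the dichotomy then characterizes $\theta(p+2^m)$ as the unique integer $\theta\ge 1$ such that $\epsilon(r)=0$ for $r<\pi-\theta$ and $\epsilon(r)=1$ for $\pi-\theta\le r<\pi$. Because the same $\epsilon$ governs $p+2^n$, the same $\theta$ is extracted on the other side, so $\theta(p+2^n)=\theta(p+2^m)$.

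I do not anticipate any serious obstacle: the only conceptual point is recognizing that Proposition \ref{power1} attaches a \emph{single} pattern $\epsilon$ to the two rows simultaneously. Once that is said, the step that might otherwise have been delicate --- that the ``block of $1$'s at the end of $[0,\pi)$'' structure survives the passage from $2^m$ to $2^n$ --- is free, since that structure is precisely what the dichotomy preceding the proposition already provides.
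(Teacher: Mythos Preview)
Your proposal is correct and follows exactly the route the paper takes: the paper's proof is the single line ``This follows from Proposition~\ref{power1},'' and you have simply unpacked what that means by introducing the common indicator $\epsilon$ and reading off the threshold from it. Your case split on $\epsilon(\pi)$ and the observation that the dichotomy preceding the proposition is governed entirely by $\epsilon$ restricted to $[0,\pi)$ are precisely the content hidden in the paper's one-line proof.
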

 \begin{proof}
This follows  from Proposition \ref{nm}.
\end{proof}
This implies  the existence of a  ``cothreshold" $\bar\theta(p)=\theta(p+2^n)$ with $p<2^n$ which does not depend on  $n$.
Here is the sequence of periods and thresholds for small values of $p$.

\medskip
\renewcommand{\arraystretch}{1.5}
\begin{center}
\begin{tabular}{|c|c|c|c|c|c|c|c|c|c|c|c|c|c|c|c|c|c|c|c|}
\hline
p&1&2&3&4&5&6&7&8&9&10&11&12&13&14&15&16&17&18
\\
\hline
$\pi(p)$&1&2&2&4&2&4&4&8&2&4&4&8&4&4&4&16&2&4
\\
\hline
$\theta(p)$&-&1&1&2&1&2&2&4&1&2&2&4&2&1&1&8&1&2
\\
\hline
\end{tabular}
\end{center}

\medskip

Looking at this table we see that, in this range, $\theta(p)\leq\pi(p)/2$. Actually this is always true, as follows from the following results, due to Dr\'apal \cite{Dra1}, \cite{Dra2}, \cite{Dra3}.
\begin{proposition}\label{idempotents}
For $p\geq 1$ one has  $p*p=0$ if and only if $p$ is a power of $2$.
\end{proposition}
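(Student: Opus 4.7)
For the forward direction, observe that the sequence $p*0,p*1,\ldots,p*(\pi(p)-1)$ is strictly increasing in $[0,p-1]$, so $\pi(p)\leq p$. Since $\pi(p)$ is a power of~$2$, for $p=2^k$ this forces $\pi(p)\mid p$ and hence $p*p=p*0=0$. In fact, applying (\ref{modulo2}) with $n=k$ gives $2^k*q=q\bmod 2^k$, so that $\pi(2^k)=2^k$ exactly.

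For the converse, I would argue by strong induction on $p$. Write $p=2^a+p'$ with $2^a$ the leading bit and $0\leq p'<2^a$. By (\ref{modulo2}) applied with $n=a$, $p'*p'\equiv p*p=0\pmod{2^a}$, and since $p'*p'<2^a$, in fact $p'*p'=0$. By the inductive hypothesis, either $p'=0$ (so $p=2^a$, done) or $p'=2^b$ for some $0\leq b<a$, reducing everything to the remaining case $p=2^a+2^b$ with $b<a$, which one has to rule out.

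In this remaining case, $p*p=0$ forces $\pi(p)\mid p=2^b(1+2^{a-b})$; since $1+2^{a-b}$ is odd and $\pi(p)$ is a power of~$2$, $\pi(p)\leq 2^b$. On the other hand, Proposition~\ref{power1} gives $\pi(2^a+2^b)=\bar\pi(2^b)\in\{2^b,2^{b+1}\}$, so the scenario to rule out is $\pi(p)=2^b$ (the ``non-doubling'' case). For $b=0$ this is immediate: $\pi(p)=1$ would make the row of $p$ identically zero, contradicting $p*(\pi(p)-1)=p-1=2^a>0$. For $b\geq 1$, I would invoke (\ref{puissancesback}) with $2^n=2^{b+1}$ to rewrite $(p-1)^{(2^b)}=p*2^b$; the non-doubling hypothesis demands $(p-1)^{(2^b)}=0$, while computing the iterated left powers $(p-1)^{(k+1)}=(p-1)^{(k)}*(p-1)$ step by step, using only rows of integers $<p$ (which are controlled by the inductive hypothesis), should yield $(p-1)^{(2^b)}=2^a\neq 0$.

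I expect the main obstacle to be precisely this last iterative computation: carrying it out uniformly in $a,b$ requires a careful mod-$2^j$ analysis tracking how the ``high bit'' $2^a$ propagates through each iterate. This is exactly the kind of threshold analysis developed in Section~\ref{seuils} and originally due to Dr\'apal \cite{Dra1}, \cite{Dra2}, \cite{Dra3}.
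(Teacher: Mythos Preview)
Your forward direction is correct, and your reduction of the general case to the two-bit case $p=2^a+2^b$ is also correct; it is equivalent to, though slightly more roundabout than, the paper's one-line reduction, which simply takes $p$ modulo $2^{\nu_2(p)+1}$ (where $\nu_1<\nu_2$ are the two lowest exponents in the binary expansion of $p$) and uses \eqref{modulo2} to conclude that $(p*p)\bmod 2^{\nu_2+1}=(2^{\nu_1}+2^{\nu_2})*(2^{\nu_1}+2^{\nu_2})$, which is nonzero once the two-bit case is settled.

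The gap you yourself flag in the two-bit case is genuine, and your proposed route through left powers is both harder than necessary and not actually supported by your induction. The inductive hypothesis tells you only \emph{which} integers below $p$ are idempotent; it says nothing about their full rows, so the assertion that the iterates $(p-1)^{(k)}$ are ``controlled by the inductive hypothesis'' has no content. The paper closes this gap much more simply, by invoking Proposition~\ref{double}: for $p=2^m+2^n$ with $m<n$ one has $\pi(p)=2^{m+1}$ (so the period \emph{does} double, ruling out your ``non-doubling'' scenario outright), and the explicit row formula there gives $p*p=p*2^m=2^n\ne 0$. The proof of Proposition~\ref{double} is a short direct descent using the recurrence~\eqref{recurrence} together with the observation that every $r$ with $2^n<r<2^n+2^m$ has period at most $2^m$; none of the threshold machinery of Section~\ref{seuils} is required.
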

\begin{proof}
It is easy to see that $2^n*2^n=0$ for all $n$.
If $p=2^m+2^n$ with $m<n$ then $p*p=2^m\ne 0$ by Proposition \ref{double}. In general, if $p=\sum_i2^{\nu_i(p)}$ we can reduce modulo 
$2^{\nu_2(p)+1}$, and use the preceding case to prove that $p*p\ne 0$.
\end{proof}
Since $q\mapsto p* q$ is a homomorphism for $*$ it must map elements with square zero to elements with square zero therefore we have
\begin{cor}\label{seuil2}
For any $p$ and $2^k<\pi(p)$ there exists $l\geq k$ with  $p*2^k=2^l$.
\end{cor}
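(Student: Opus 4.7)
The hint given just before the statement pinpoints the starting idea: since $\lambda_p:q\mapsto p*q$ is a homomorphism of $(\mathbf{N},*)$, left distributivity yields
\[
(p*2^k)*(p*2^k)=p*(2^k*2^k)=p*0=0,
\]
where $2^k*2^k=0$ by the ``only if'' part of Proposition \ref{idempotents}. Applying the converse direction of the same proposition to $p*2^k$ then forces this value to be either $0$ or a power of $2$.

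To rule out the value $0$, the hypothesis $2^k<\pi(p)$ is decisive: the sequence $p*q$, for $q=0,1,\ldots,\pi(p)-1$, is strictly increasing starting from $p*0=0$, so $p*2^k>0$. Consequently $p*2^k=2^l$ for some integer $l\geq 0$.

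It remains to prove $l\geq k$. Here I would appeal to the modular reduction property (\ref{modulo2}). Suppose for contradiction that $l<k$, and reduce modulo $2^{l+1}$. Setting $p'=p\bmod 2^{l+1}$, and noting that $2^k\bmod 2^{l+1}=0$ since $k\geq l+1$, we obtain
\[
2^l = 2^l\bmod 2^{l+1} = (p*2^k)\bmod 2^{l+1} = p'*0 = 0,
\]
a contradiction. Hence $l\geq k$, as claimed.

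The proof is essentially routine once the two ingredients are assembled; the only subtlety worth flagging is that the homomorphism argument by itself constrains $p*2^k$ only to the set of powers of $2$, and one needs the modular reduction at the right scale (namely $2^{l+1}$) to convert the identity $p*2^k=2^l$ into the lower bound $l\geq k$ on the exponent.
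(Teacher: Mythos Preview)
Your proof is correct and follows the paper's approach: the one-line argument there is precisely that $\lambda_p$ is a $*$-homomorphism, hence sends square-zero elements (i.e., powers of $2$, by Proposition~\ref{idempotents}) to square-zero elements. For the bound $l\geq k$, which the paper leaves implicit, your modular-reduction argument is fine, though a more direct route is the strict monotonicity of $q\mapsto p*q$ on $[0,\pi(p)-1]$, which gives $p*2^k\geq 2^k$ at once.
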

 
Finally we get the following estimate on tresholds.
\begin{proposition}\label{seuil}
Let  $p$ be a positive integer with $p<2^n$ and   $q=p+2^n$, then \begin{itemize}
\item
either $\theta(q)=\pi(p)$ and $\pi(q)=2\pi(p)$ 
\item or $\theta(q)<\pi(p)/2$ and $\pi(q)=\pi(p)$.
\end{itemize}
\end{proposition}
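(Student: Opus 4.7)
The plan is to recognize that the dichotomy in the proposition already matches the two cases in the very definition of $\theta(q)$ recalled just above the statement: either $\pi(q) = 2\pi(p)$, in which case $\theta(q) = \pi(p)$ by convention and the first alternative holds outright; or $\pi(q) = \pi(p)$ with some integer $\theta(q) \geq 1$. So the real content is to show that in this second case $\theta(q) < \pi(p)/2$. Write $\pi(p) = 2^k$. The edge case $k = 0$ (where $p = 1$) falls into the first alternative by the explicit computation of the row of $1+2^n$ recalled in the proof of Proposition \ref{power1}: one gets $\pi(q) = 2$. From now on assume $k \geq 1$ and suppose, for contradiction, that $\pi(q) = \pi(p) = 2^k$ and $\theta(q) \geq 2^{k-1}$.

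The decisive move is to test both $p$ and $q$ at the midpoint $r = 2^{k-1}$ of the common period. Corollary \ref{seuil2} applied to $p$, using $2^{k-1} < \pi(p)$, gives $p * 2^{k-1} = 2^l$ for some $l \geq k-1$; since the sequence $p*0, p*1, \ldots$ is strictly increasing from $0$ to $p-1$ on its period, one has $0 < p * 2^{k-1} \leq p - 1 < 2^n$, hence $l < n$. On the other hand the assumption $\theta(q) \geq 2^{k-1}$ places $r = 2^{k-1}$ in the range $\pi(p) - \theta(q) \leq r < \pi(p)$ where the definition of $\theta(q)$ forces $q * r = p * r + 2^n$. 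Thus $q * 2^{k-1} = 2^l + 2^n$, which is not a power of $2$ because $l \neq n$. But Corollary \ref{seuil2} applied to $q$, using $2^{k-1} < \pi(q) = 2^k$, requires $q * 2^{k-1}$ to be a power of $2$. The contradiction proves $\theta(q) < \pi(p)/2$.

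The main obstacle is identifying the right test point: $r = 2^{k-1}$ is chosen because it is the largest power of $2$ strictly below the period $2^k$, which is precisely what makes Corollary \ref{seuil2} apply to both $p$ and $q$ simultaneously, and because it matches numerically the bound $\pi(p)/2$ that appears in the conclusion. Once this test point is fixed, everything reduces to the elementary fact that $2^l + 2^n$ is a power of $2$ only when $l = n$, which the inequality $p < 2^n$ rules out. I do not expect any technical obstruction beyond this, as Corollary \ref{seuil2} (itself a consequence of Proposition \ref{idempotents}) does all the heavy lifting.
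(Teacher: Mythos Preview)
Your proof is correct and follows essentially the same approach as the paper: both argue by contradiction, testing at the midpoint $r=2^{k-1}$ and invoking Corollary~\ref{seuil2} to force $q*2^{k-1}$ to be a power of~$2$, which it cannot be if it equals $p*2^{k-1}+2^n$ with $0<p*2^{k-1}<2^n$. Your version is in fact slightly more careful, assuming $\theta(q)\geq 2^{k-1}$ (rather than strictly greater) for the contradiction, which is exactly what is needed to secure the strict inequality $\theta(q)<\pi(p)/2$ claimed in the statement.
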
 
\begin{proof}
Assume that $\pi(q)=\pi(p)=2^m$ and $\theta(q)>2^{m-1}$ then $q*2^{m-1}=p*2^{m-1}+2^n$ with $0<p*2^{m-1}<2^n$, which is impossible in view of 
Corollary \ref{seuil2}.
\end{proof}
The values of the thresholds in the table above are all powers of $2$. This is not true generally but here 
 is the repartition of pairs $(\theta(p),\pi(p))$ for $p$ between $1$ and $2^{12}$,
(for example there are 761 numbers $1\leq p\leq 2^{12}$ with period  $16$ and threshold $4$):

\begin{center}
\scalebox{0.8}{
$\begin{array}{ccccccccccccccc}
& \bf 2&\bf 4&\bf 8&\bf  16&\bf 32&\bf 64&\bf 128&\bf 256&\bf 512&\bf 1024&\bf2048&\bf 4096\\

\bf 1&12 &2103 & & & & & & & & && \\
\bf 2& &66 & &30 & & & & & &&&\\
\bf 3& & &398 &213 & & & & & &&&\\
\bf 4& & &58 &761 & & & & & &&&\\
\bf 7& & & & 63& & & & & &&&\\
\bf 8& & &&121 & & & & & & &&\\
\bf 16& & & & &12 & 110& & & &&&\\
\bf 32& & & & & & 34& & & &&&\\
\bf 48& & & & & & &19 &6 & &&&\\
\bf 64& & & & & & & 22& 26& &&&\\
\bf 112& & & & & & & &1 & &&&\\
\bf 128& & & & & & & &25 & &&&\\
\bf 256& & & & & & & & &4 &2&&\\
\bf 512& & & & & & & & & &6&&\\
\bf 1024& & & & & & & & & &&2&\\
\bf 2048& & & & & & & & & &&&1\\

\end{array}$}
\end{center}

The data above seem to indicate that   $\theta(p)$ only takes values of the form
$2^i-2^j$.
Indeed the  values of thresholds in this table are, apart from powers of 2:
$$3=2^2-2^0,\ 7=2^3-2^0,\ 15=2^4-2^0,\ 48=2^6-2^4,\ 112=2^7-2^4,$$ (in the table above only values smaller than $2^{12}$ are considered, in fact I pushed the computations to $2^{31}$ and the claim still holds).
 I do not know whether this   property holds for all thresholds.

\subsection{Binary expansion}
Any nonnegative integer can be identified with a finite subset of $\bf N$ by  its binary expansion. The inclusion relation of subsets of $\bf N$  induces an order relation $p\sqsubset q$ on the nonnegative integers, thus $p\sqsubset q$ if and only if the digits of the binary expansion of $p$ are smaller than the corresponding ones of $q$.
The existence of the threshold implies that, for any $p,q$ one has 
\begin{equation}\label{binaire}
p*q\sqsubset p-1.
\end{equation}
Here is a graph of the order relation $\sqsubset$,  for $p\leq 256$, which is essentially a Sierpinsky triangle:
$$\includegraphics[scale=.3]{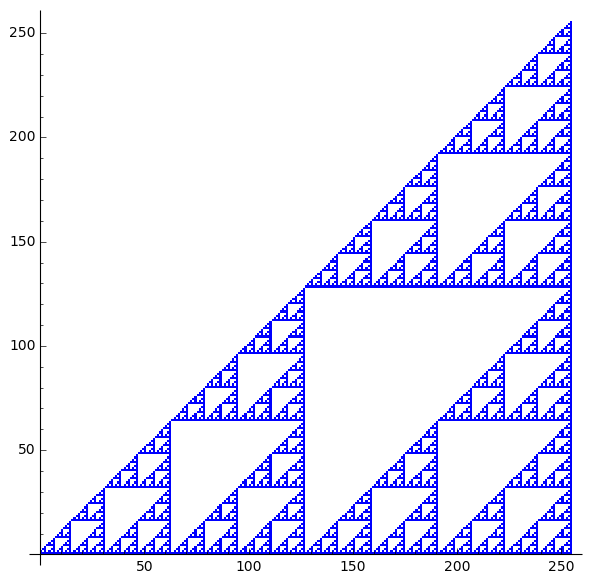}$$
The Laver table can be represented as a subset of this graph, (shifted by 1 to take into account the $p-1$ in 
(\ref{binaire})):  drawing the points $p*q$ above $p$ for each $p$, we get the following picture 

$$\includegraphics[scale=.3]{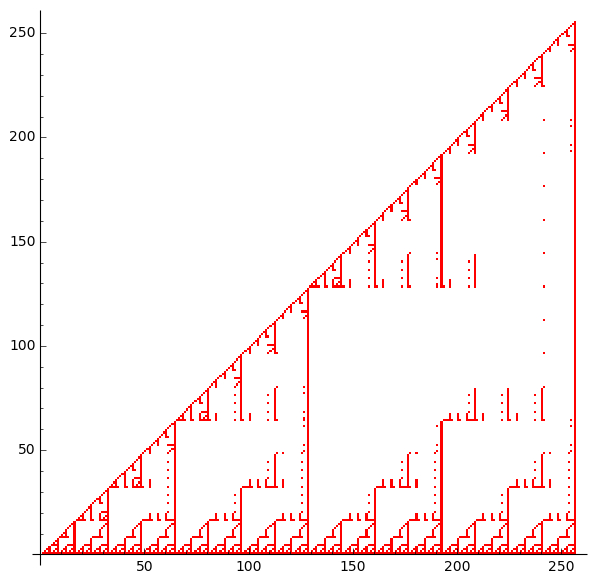}$$

As is clear on this graph, the Laver tables become much sparser than the Sierpinsky triangle as $n$ increases. More on this in sections \ref{freq} and \ref{Max}.
\section{Some rows of the Laver tables}\label{rows}
In this section I explain how to compute the row of $p$ for some particular values of $p$.
\subsection{The row of $2^n$}
It is easy to see, by reverse induction on $q$,  that $2^n*q=q\mod 2^n$ and  $\pi(2^n)=2^n$, moreover $\theta(2^n)=2^{n-1}$ since the period doubles between $2^{n-1}$ and $2^n$.
\subsection{The row of $p=2^m+2^n$}\label{2m2n}

\begin{proposition}\label{double}
Let $n>m\geq 0$  and  $p=2^m+2^n$ then $\pi(p)=2^{m+1}$ moreover if  $q<2^m$ then 
$p*q=q$ and $p*(q+2^m)=q+2^n$.

\end{proposition}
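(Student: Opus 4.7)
The plan is a two-level induction combining the modular machinery of the previous sections with Proposition \ref{power1}.

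\textbf{Step 1 (modular constraints).} Since $p=2^m+2^n\equiv 0\pmod{2^m}$ and $p\equiv 2^m\pmod{2^n}$, equation (\ref{modulo2}) combined with the identity $2^m*r=r\bmod 2^m$ from Section \ref{rows} gives
\[
p*q\equiv q\pmod{2^m},\qquad p*q\equiv q\bmod 2^m\pmod{2^n}.
\]
Together with the bound $p*q<p=2^m+2^n$ this forces every value of the row to lie in the $2^{m+1}$-element set
\[
S=\{0,1,\dots,2^m-1\}\cup\{2^n,2^n+1,\dots,2^n+2^m-1\}.
\]

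\textbf{Step 2 (period dichotomy).} Evaluating $p*q\equiv q\pmod{2^m}$ at $q=\pi(p)$, using $p*\pi(p)=0$, forces $2^m\mid\pi(p)$. Strict monotonicity inside one period combined with $|S|=2^{m+1}$ gives $\pi(p)\le 2^{m+1}$, so $\pi(p)\in\{2^m,2^{m+1}\}$; this is exactly the dichotomy of Proposition \ref{seuil} applied with $p'=2^m$ (whose period is $2^m$) and $q=p$.

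\textbf{Step 3 (reduction via Proposition \ref{power1}).} Induct on $n$. For $n\ge m+2$, apply Proposition \ref{power1} with $P=2^m$, $M=m+1$, $N=n$; the biconditional
\[
(2^m+2^{m+1})*q=2^m*q+2^{m+1}\iff(2^m+2^n)*q=2^m*q+2^n
\]
transfers the ``high-bit'' pattern of the row of $3\cdot 2^m$ to that of $2^m+2^n$. Assuming the base case $n=m+1$ is in hand (Step 4), this pattern is ``add the high bit iff $q\bmod 2^{m+1}\ge 2^m$''. Combined with $2^m*q=q\bmod 2^m$, this reads off exactly to the formulas $p*q=q$ for $q<2^m$ and $p*(q+2^m)=q+2^n$ for $q<2^m$.

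\textbf{Step 4 (base case $n=m+1$).} The remaining task is to rule out $\pi(3\cdot 2^m)=2^m$. I would induct on $m$: the case $m=0$ is read off directly from the table of order $4$, where $3*1=2$ gives $\pi(3)=2$. For $m\ge 1$, Corollary \ref{seuil2} applied to $p_0=3\cdot 2^m$ together with Step 1 pins down $p_0*2^k=2^k$ for every $k<m$ (the only power of $2$ in $\{2^k,\,2^k+2^{m+1}\}$). Then Proposition \ref{power1} with $P=2^{m-1}$, $M=m$, $N=m+1$ equates the ``add $2^m$'' pattern of the row of $3\cdot 2^{m-1}$ (known by the secondary inductive hypothesis) with the ``add $2^{m+1}$'' pattern of the row of $2^{m-1}+2^{m+1}$, and a careful comparison with the values forced above at specific positions in the row of $p_0$ excludes the short-period alternative.

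\textbf{Step 5 (reading off the row).} Once $\pi(p)=2^{m+1}$ is established, the $2^{m+1}$ strictly increasing values $p*0,\dots,p*(2^{m+1}-1)$ lie in $S$, start at $0$ and end at $p-1=2^n+2^m-1$; since $|S|=2^{m+1}$ they must exhaust $S$ in its natural order, yielding the claimed formulas.

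The principal difficulty is Step 4: the short-period alternative is locally self-consistent under modular reductions, and since Proposition \ref{idempotents} itself depends on the present statement, the contradiction must come from the combinatorial fine structure enforced by Corollary \ref{seuil2} and Proposition \ref{power1}, rather than from the simpler observation that $p$ is not idempotent.
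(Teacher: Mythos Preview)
Your Steps 1, 2, 3 and 5 are correct and give a clean reduction: the modular constraints and the period dichotomy $\pi(p)\in\{2^m,2^{m+1}\}$ are established non-circularly, and Proposition~\ref{power1} legitimately reduces the general case to $n=m+1$. The problem is entirely in Step~4.

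First, there is a circularity you only half-acknowledge. You correctly flag that Proposition~\ref{idempotents} depends on the present statement, but Corollary~\ref{seuil2} and Proposition~\ref{seuil} are derived \emph{from} Proposition~\ref{idempotents} in the paper, so they are equally off-limits here. You cannot use ``$p_0*2^k$ is a power of $2$'' without proving it independently.

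Second, even granting those tools, the argument does not close. Proposition~\ref{power1} with $P=2^{m-1}$ describes the row of $2^{m-1}+2^{m+1}$, which is a different integer from $p_0=2^m+2^{m+1}$; nothing you have written relates the two. And the information $p_0*2^k=2^k$ for $k<m$ is perfectly compatible with the short-period scenario $\pi(p_0)=2^m$ (it just says the threshold exceeds $2^{m-1}$); ruling that out is exactly Proposition~\ref{seuil}, which is circular. The ``careful comparison'' is never made concrete, and I do not see one that works along the indicated route.

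The paper proceeds quite differently and avoids all of this by computing the top of the row directly from the recurrence~(\ref{recurrence}). The key elementary observation is that every $r$ with $2^n<r<p$ has the form $r=2^n+s$ with $0<s<2^m$, whence $\pi(s)\le s<2^m$ forces $\pi(r)\le 2\pi(s)\le 2^m$; since $p-1\equiv -1\pmod{\pi(r)}$, one gets $r*(p-1)=r-1$. Iterating the recurrence downward from $p*(2^l-1)=p-1$ yields $p*(2^l-k)=p-k$ for $k=1,\dots,2^m$, so $p*(2^l-2^m)=2^n\ne 0$ and $\pi(p)>2^m$. This works uniformly for all $n>m$, needs no induction on $n$ or $m$, and uses nothing downstream of the proposition being proved. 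The simplest repair of your Step~4 is precisely this computation---but once you have it, your reduction machinery becomes superfluous.
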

\begin{proof} 
 If  $2^n<r<2^m+2^n$ then $r=2^n+s$ with $s<2^m$ therefore $\pi(s)<2^m$ and $\pi(r)\leq 2^m$.
On the other hand, by (\ref{recurrence}), one has $$p*(2^l-k-1)=(p*(2^l-k))*(p-1)$$
therefore by induction on $k\leq 2^m$ one has  $p*(2^l-k)=p-k$ since
 $$(p*(2^l-k))*(p-1)=(p*(2^l-k))*(2^m-1)=(p*(2^l-k))-1.$$
 Finally  $p*(2^l-2^m)=2^n$
and $\pi(2^n+2^m)>\pi(2^m)$. The proposition follows from that. 
\end{proof}

\subsection{The row of $p=2^l+2^m+2^n$}
This case is more involved than the preceding ones.
Observe that $\pi(2^l+2^m)=2^{l+1}$ by Proposition \ref{double}, therefore $\pi(p)=2^{l+1}$ or $2^{l+2}$.
 We will  prove the following  result, by induction on $n$.

\begin{proposition}\label{row3}\
Let $n$ be a positive integer.
\begin{enumerate}
\item
For $0\leq l<m<n$ and $p=2^l+2^m+2^n$ one has
$\pi(p)=2^{l+2}$ if $l$ is even and $\pi(p)=2^{l+1}$ if $l$ is odd, moreover 
$\theta(p)=2^{l-1}$ in this last case. 
\item
 For  any $p<2^{n+1}$ one has $\pi(p)\leq 2^{n}$ with equality exactly in the following cases:
$$ 
\begin{aligned}
p&=2^{n}\\
p&=2^{n}+2^{n-1}\\
p&=2^{n}+2^{n-1}+2^{n-2}\quad\text{if $n$ is even.}
\end{aligned}
$$
\end{enumerate}
\end{proposition}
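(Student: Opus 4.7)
I would prove both parts simultaneously by strong induction on $n$, establishing Part (1) at level $n$ first and then using it, together with Part (2) at level $n-1$, to deduce Part (2) at level $n$. Base cases for small $n$ are verified directly from the Laver tables, which covers the low-$l$ configurations needed to start the recursion.

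For Part (1), set $q = 2^l + 2^m$ so that $p = q + 2^n$. Proposition \ref{double} gives $\pi(q) = 2^{l+1}$, and Proposition \ref{seuil} then forces $\pi(p) \in \{2^{l+1}, 2^{l+2}\}$, with $\theta(p) = 2^{l+1}$ in the doubling case and $\theta(p) < 2^l$ otherwise. A substantial simplification is provided by Propositions \ref{power1} and \ref{power2}: the former shows that $\pi(2^l + 2^m + 2^n)$ is independent of the top exponent $n$ (as soon as $n > m$), and the latter shows that for $m \ge l + 2$ it is also independent of $m$. Thus in the generic regime $m \ge l + 2$ the period depends only on $l$, and the inductive hypothesis (applied at a smaller top bit) identifies it with the expected $2^{l+2}$ or $2^{l+1}$ according to the parity of $l$. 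The boundary sub-case $m = l + 1$ is attacked via Proposition \ref{power3}, which bounds its period between $\pi(2^l + 2^{l+2} + 2^n)$ and twice that; combined with the range $\{2^{l+1}, 2^{l+2}\}$ this determines the value for $l$ even, but leaves two possibilities for $l$ odd, which must be settled by a direct threshold computation using the recursion (\ref{recurrence}).

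For Part (2), split on the position of $p$ relative to $2^n$. If $p < 2^n$ the inductive Part (2) at level $n-1$ gives $\pi(p) \le 2^{n-1}$, and $\pi(2^n) = 2^n$ is immediate. For $p = 2^n + q$ with $0 < q < 2^n$, Proposition \ref{seuil} yields $\pi(p) \le 2\pi(q) \le 2^n$, and the equality $\pi(p) = 2^n$ forces both $\pi(q) = 2^{n-1}$ and the doubling case of that proposition. The inductive Part (2) at level $n-1$ restricts $q$ to one of $2^{n-1}$, $2^{n-1} + 2^{n-2}$, and (only when $n$ is odd) $2^{n-1} + 2^{n-2} + 2^{n-3}$. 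Proposition \ref{double} handles the first; Part (1) just proved at level $n$ with $l = n-2$ handles the second and gives doubling exactly when $n$ is even; and the four-bit candidate $p = 2^n + 2^{n-1} + 2^{n-2} + 2^{n-3}$ is ruled out by an analogous modular reduction argument showing $p * 2^{n-1} = 0$.

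The main obstacle is the parity dichotomy in Part (1). Using equation (\ref{modulo2}) and Corollary \ref{seuil2}, one quickly sees that $p * 2^{l+1} \in \{0, 2^n\}$: reducing modulo $2^m$ gives $2^l * 2^{l+1} = 0$, reducing modulo $2^n$ gives $(2^l + 2^m) * 2^{l+1} = 0$ since $2^{l+1}$ is exactly the period of $2^l + 2^m$, and Corollary \ref{seuil2} constrains the value to be a power of $2$. These symmetric constraints do not by themselves expose the parity of $l$, so a refined argument is needed — most naturally a secondary induction on $l$ in steps of $2$, exploiting the period-$2$ structure of the answer, but implementing it cleanly requires iterating the recursion (\ref{recurrence}) through several terms of the row of $p$ to determine $\theta(p)$. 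This is where the real combinatorial work of the proof lies.
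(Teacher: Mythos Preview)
Your overall strategy---strong induction on $n$, reducing most triples via Propositions~\ref{power1} and~\ref{power2}, with residual cases settled by iterating~(\ref{recurrence})---is exactly the paper's. The gap is in identifying which triples are residual.

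Your ``generic regime'' reduction via Proposition~\ref{power2} requires a target triple $(l,s,t)$ with $s\ge l+2$ and $t\le n-1$ already covered by induction; the smallest admissible $t$ is $l+3$, so this succeeds only for $l\le n-4$. Hence $(l,m,n)=(n-3,n-1,n)$, although it satisfies $m\ge l+2$, is \emph{not} reducible to a smaller top bit and must be computed directly---this is precisely the paper's case~(a). Likewise, Proposition~\ref{power3} requires $2^{l+1}<2^{n-1}$, so it cannot be applied to the boundary triple $(n-2,n-1,n)$, which is the paper's case~(b). These two top triples are where the paper carries out the explicit row computation via~(\ref{recurrence}), separately for $r$ odd and $r$ even because the intermediate values $p*(2^t-k)$ behave differently. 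Your sketch locates the direct work only at ``$m=l+1$, $l$ odd,'' which misses case~(a) entirely and understates case~(b).

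For Part~(2), the modular reduction you propose for the four-bit candidate $p=2^n+2^{n-1}+2^{n-2}+2^{n-3}$ gives only $p*2^{n-1}\equiv 0\pmod{2^n}$, hence $p*2^{n-1}\in\{0,2^n\}$; combined with Corollary~\ref{seuil2} this is still consistent with $\pi(p)=2^n$. The paper instead argues by contradiction: it walks down the row of $p$ via~(\ref{recurrence}) and uses the just-established value $\pi(2^n+2^{n-1}+2^{n-2})=2^{n-1}$ to force a drop below $2^n$ too early.
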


\textsc{Proof.}
The statements are  easy to check for small values of $n$. Let $r>0$ and   assume that Proposition \ref{row3} holds for all values  $l<m<n\leq r$. We will prove that it holds for all $l<m<n=r+1$.
The proof is divided into several cases.

\medskip

{\sl I. $r$ is odd}

We start with proving (1). If $m<r$ then the lines of $p=2^l+2^m+2^{r+1}$ and $p_-=2^l+2^m+2^{r}$ can be deduced from one another by   Proposition \ref{power1} therefore we conclude by  induction. Similarly, if 
$l<r-2$ and $m=r$ we can apply Proposition \ref{nmst} to conclude. 

It remains to consider the cases
$l=r-2$ and $l=r-1$. 

a) Let $l=r-2$ so that $p=2^{r+1}+2^{r}+2^{r-2}$.
Take some $t$ large enough (e.g. $t=r+2$) then one has $p-1=p*(2^t-1)$  and, by (\ref{recurrence}): 
\begin{equation}\label{rec1}
p*(2^t-k)=(p*(2^t-(k-1))*(p-1).
\end{equation}
Let $u$ be the smallest $s\geq 1$ such that $\pi(p-s)\geq 2^{r-1}$.
 Since $p-2^{r-3}=2^{r+1}+2^{r}+2^{r-3}$ one has, by what we just saw, that
$\pi(p-2^{r-3})=2^{r-1}$ therefore
 $u\leq 2^{r-3}$. Applying (\ref{rec1}) and noting that $p-1=2^{r-2}-1\mod 2^{r-2}$ one sees, by induction on $s$, that 
 $p*(2^t-s)=p-s$ for all $s\leq u$. 
Suppose that $u<2^{r-3}$ then $p-u=2^{r+1}+2^{r}+2^{r-2}-u=2^{r+1}+2^{r}+v$ with $2^{r-3}< v<2^{r-2}$. In order that 
$\pi(p-u)\geq 2^{r-1}$ one must have $\pi(v)\geq 2^{r-3}$ therefore, by induction hypothesis, either
 $v=2^{r-3}+2^{r-4}$ or $v=2^{r-3}+2^{r-4}+2^{r-5}$. However, by Proposition \ref{nmst}, one has $\pi(2^{r+1}+2^{r}+v)=
\pi(2^{r}+2^{r-1}+v)$ and one can use the induction hypothesis to see that in  these two cases 
$\pi(2^{r}+2^{r-1}+v)\leq 2^{r-2}$. It follows that $s=2^{r-3}$ and $p*(2^t-2^{r-3})=2^{r+1}+2^r+2^{r-3}:=w$ with $\pi(w)=2^{r-1}$ as we have seen. We can now compute: 
$$p*(2^l-2^{r-3}-1)=w*(p-1)=w*(2^{r-2}-1)=2^{r-3}+2^r-1<2^{r+1}.$$ It follows that 
$\theta(p)=2^{r-3}$ and $\pi(p)=2^{r-2}$, as claimed.

b) Let $l=r-1$. A similar reasoning as in case a) shows that $p*(2^l-k)=p-k$ for $k=1,\ldots,2^{r-2}$, in particular
$$p*(2^l-2^{r-2})=p-2^{r-2}=2^{r+1}+2^{r}+2^{r-2}.$$ We can now apply case a) and  compute 
$$p*(2^l-2^{r-2}-1)=(2^{r+1}+2^{r}+2^{r-2})*(p-1)=(2^{r+1}+2^{r}+2^{r-2}-1).$$ Using now that all $q$ with
$2^{r+1}+2^r<q\leq 2^{r+1}+2^{r}+2^{r-2}-1$ have period at most $2^{r-1}$ we obtain 
$p*(2^l-2^{r-1})=2^{r+1}+2^r$. We know that $\pi(2^{r+1}+2^r)=2^{r+1}$ and we can compute 
$(2^{r+1}+2^r)*(p-1)=2^{r+1}+2^{r-1}-1$. Since all $q<2^{r+1}+2^{r-1}$ have period at most $2^{r-1}$ it follows that
$p*(2^l-2^r)=2^{r+1}$ and $\pi(p)=2^{r+1}$.

Let us now prove (2). We have already checked that the periods of $p=2^{r+1}, 2^{r+1}+2^r$ and $2^{r+1}+2^r+2^{r-1}$ are equal to $2^{r+1}$. Suppose that some  other $p<2^{r+2}$ satisfies $\pi(p)=2^{r+1}$ then $p>2^{r+1}$ moreover if $p=2^{r+1}+q$ with $q<2^{r+1}$ then $\pi(q)= 2^r$ therefore by induction hypothesis one has either $q=2^r$ or $q=2^r+2^{r-1}$.

\medskip

{\sl II. $r$ is even}

The argument when  $r$ is even is similar.
We start with (1). The cases $m<r$ and $m=r,\,l<r-2$ are identical as the odd case.

Suppose now $p=2^{r+1}+2^{r}+2^{r-2}$.
Again we prove that $p*(2^l-k)=p-k$ for $k=1,\ldots,2^{r-3}$, in particular 
$p*(2^l-2^{r-3})=2^{r+1}+2^r+2^{r-3}$. This time one has $\pi(2^{r+1}+2^r+2^{r-3})=2^{r-2}$ and we can argue as in b) of case I to show that $\pi(p)=2^r$.

Assume that $p=2^{r+1}+2^{r}+2^{r-1}$, now   $p*(2^l-k)=p-k$ for $k=1,\ldots,2^{r-2}$, in particular 
$p*(2^l-2^{r-2})=2^{r+1}+2^r+2^{r-2}$ with $\pi(2^{r+1}+2^r+2^{r-2})=2^{r}$ as we have just seen and an argument similar as in case I above   shows that $\pi(p)=2^{r}$ and $\theta(p)=2^{r-2}$.

For the proof of (2), we have already checked that the periods of $2^{r+1}$ and of $2^{r+1}+2^r$  are $2^{r+1}$. Suppose that some  other $p<2^{r+1}$ satisfies $\pi(p)=2^{r+1}$ then $p>2^{r+1}$. Moreover if $p=2^{r+1}+q$ with $q<2^{r+1}$ then $\pi(q)\geq 2^r$ therefore by induction hypothesis one has either $q=2^r$ or $q=2^r+2^{r-1}$ or
$q=2^r+2^{r-1}+2^{r-2}$.  It remains to prove that $\pi(2^{r+1}+2^r+2^{r-1}+2^{r-2})=2^{r}<2^{r+1}$. The argument is similar as above: if one had $\pi(2^{r+1}+2^r+2^{r-1}+2^{r-2})=2^{r+1}$
one would have $(2^{r+1}+2^r+2^{r-1}+2^{r-2})*(2^l-2^{r-2})=2^{r+1}+2^r+2^{r-1}$ but by what we have seen above
$\pi(2^{r+1}+2^r+2^{r-1})=2^r$ therefore $\pi(2^{r+1}+2^r+2^{r-2})*(p-1)<2^{r+1}+2^r+2^{r-2}-1$.

\qed

\begin{remark} It would be also possible to prove the preceding proposition using  results of Dougherty \cite{Doug}
or Dr\'apal \cite{Dra4}. We provided the proof above in order to give a glimpse of the kind of arguments used in these computations.
\end{remark}

\section{Why are Laver table interesting?}\label{interest}
\subsection{Computation of the Laver tables}
In view of their very elementary definition and of their connection with basic operations such as the composition of functions, it is clear that Laver tables are fundamental objects in mathematics.
They are  also very recursive objects. This can be seen from  the relation (\ref{ld}) defining self-distributivity, in which the number of symbols on both sides is not the same. This implies that the computation of a product $p*q$ using rules (\ref{ld}) and (\ref{ld1}) involves going through long recursions.  As I wrote in the introduction, in some respect these objects are reminiscent of  the Mandelbrot or Julia sets which  also have a very simple recursive definition, yet display very complex features\footnote{Incidentally the combinatorics of powers of $2$ also plays a role in the study of the Mandelbrot set, cf \cite{Milnor1}.}.
As such, they pose challenging problems which are quite different from the ones one encounters usually in algebraic combinatorics. The most obvious question  is whether there exists a simple formula for computing $p*q$ for arbitrary $p$ and $q$, i.e. one  which involves computing  as few intermediate values as possible. Also one would like to find formulas for $\pi(p)$ or $\theta(p)$. As we shall see below, it is unlikely that such  general formulas exists, however one might find large classes of elements for which the computation is easy. Some  examples  were described above in section \ref{rows} and I will describe further results in this direction in section \ref{Max}. Examination of numerical values of the Laver tables reveals a lot of non obvious structure. As an example, 
 here are  the periods $\pi(p)$ of all $p\in[1,256]$, with the $p$ arranged in increasing order,  in  16 rows of 16. 

\begin{center}
\scalebox{0.8}{
$\begin{array}{cccccccccccccccc}
1&2&2&4&2&4&4&8&2&4&4&8&4&4&4&16\\
2&4&4&8&4&4&4&16&4&4&4&16&8&8&8&32\\
2&4&4&8&4&4&4&16&4&4&4&16&8&8&8&32\\
4&4&4&16&4&4&4&16&4&4&4&16&16&8&8&64\\
2&4&4&8&4&4&4&16&4&4&4&16&8&8&8&32\\
4&4&4&16&4&4&4&16&4&4&4&16&16&8&8&64\\
4&4&4&16&4&4&4&16&4&4&4&16&16&8&8&64\\
8&4&4&16&4&4&4&16&4&4&4&16&16&16&8&128\\
2&4&4&8&4&4&4&16&4&4&4&16&8&8&8&32\\
4&4&4&16&4&4&4&16&4&4&4&16&16&8&8&64\\
4&4&4&16&4&4&4&16&4&4&4&16&16&8&8&64\\
8&4&4&16&4&4&4&16&4&4&4&16&16&16&8&128\\
4&4&4&16&4&4&4&16&4&4&4&16&16&8&8&64\\
4&4&4&16&4&4&4&16&4&4&4&16&16&8&8&64\\
4&4&4&16&4&4&4&16&4&4&4&16&16&8&8&64\\
16&4&4&16&4&4&4&16&4&4&4&16&16&16&8&256
\end{array}$}

\end{center}

A cursory look at this table suggests that  there are many patterns here. For example several lines coincide and some of these coincidences can be easily explained using Propositions \ref{power1} or \ref{power2}. More subtle is the fact that  the first row is equal to the first column and multiplying it  by $16$ gives the last column.  This remark can be explained by the following results.
\begin{proposition}{Dr\'apal \cite{Dra1}, Theorem 3.6.}\label{draphom}

Let $\sigma_d:([1,2^n],\star_n)\to ([1,2^{n+d}],\star_{n+d})$ be given by $\sigma_d(p)=2^dp$ then $\sigma_d$ is a homomorphism if and only $n\leq 2^{r+1}$ where $2^r$ is the largest power of $2$ dividing $d$.
Similarly $\sigma_d$ is a homomorphism for $*$.
\end{proposition}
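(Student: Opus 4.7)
My plan has three parts: reduce the homomorphism condition to a combinatorial identity about iterated left-powers of $2^d$; translate this into a statement about one specific row of $\star_{n+d}$; then determine the range of $(n, d)$ for which the identity holds.

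\emph{Step 1: Reduction to the generator.} Recall from section~\ref{basic} that $[1, 2^n]$ is the free left-distributive system on one generator $1$ modulo the relation $1^{(2^n+1)} = 1$. Hence a map $f \colon [1, 2^n] \to A$ to any LD algebra $A$ is a homomorphism iff it has the form $f(q) = c^{(q)}$ for some $c \in A$ satisfying $c^{(2^n+1)} = c$. Applied to $\sigma_d$ with $\sigma_d(1) = 2^d$, the homomorphism property becomes
\[
(2^d)^{(q)} = 2^d q \quad \text{in } [1, 2^{n+d}], \qquad q = 1, 2, \ldots, 2^n. \qquad (\ast)
\]
Taking $q = 2^n$ in $(\ast)$ yields $(2^d)^{(2^n)} = 2^{n+d}$, after which $(2^d)^{(2^n+1)} = 2^{n+d} \star_{n+d} 2^d = 2^d$ follows automatically from the neutrality of $2^{n+d}$, so the defining relation of the Laver table is satisfied in the image.

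\emph{Step 2: Translation into one row.} By (\ref{puissances}), $(2^d)^{(q)} = (2^d - 1) \star_{n+d} q$, so $(\ast)$ asserts that the row of $2^d - 1$ in $\star_{n+d}$ is the arithmetic progression $2^d, 2 \cdot 2^d, \ldots, 2^{n+d}$ with common difference $2^d$. Using the recursion $(2^d)^{(k+1)} = (2^d)^{(k)} \star_{n+d} 2^d$, this is equivalent to
\[
(2^d k) \star_{n+d} 2^d \equiv 2^d(k+1) \pmod{2^{n+d}}, \qquad k = 0, 1, \ldots, 2^n - 1, \qquad (\ast\ast)
\]
i.e., left-multiplication by $2^d$ acts, on the multiples of $2^d$ in $[1, 2^{n+d}]$, as the translation by $2^d$.

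\emph{Step 3: Determining the range of $(n,d)$.} For a positive multiple $p = 2^d k$ in $[1, 2^{n+d}]$, whether $p \star_{n+d} 2^d = p + 2^d$ is controlled by the period $\pi_{n+d}(p)$: as long as $\pi_{n+d}(p)$ is large enough, the row of $p$ has not yet collapsed when column $2^d$ is reached, and the identity holds; as soon as the period drops to $\leq 2^d$, the identity breaks. Using Proposition~\ref{seuil}, Corollary~\ref{seuil2} (which forces $p \star 2^k$ to be a power of $2$ whenever $2^k < \pi(p)$), and the inventory of low-period elements from Proposition~\ref{row3}(2), one isolates the first multiple of $2^d$ in $[1, 2^{n+d}]$ whose $\star_{n+d}$-period has dropped to $\leq 2^d$. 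Writing $d = 2^r u$ with $u$ odd, the $2$-adic valuation $r$ of $d$ controls which high-order bits are reachable by multiples of $2^d$; a case analysis on bit positions shows that the first offending multiple appears precisely when $n = 2^{r+1} + 1$, so that $(\ast\ast)$ holds throughout for $n \leq 2^{r+1}$ and fails once $n > 2^{r+1}$.

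The $*$-version follows by transporting through the involution $p \leftrightarrow N - p$ of equation~(\ref{*}): since $2^{n+d} - 2^d(2^n - p) = 2^d p$, the map $\sigma_d$ intertwines with itself under this involution, and the homomorphism property for $\star_{n+d}$ is equivalent to the one for $*_{n+d}$.

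\emph{Main obstacle.} The delicate combinatorics of Step~3 -- specifically, the exact identification of the first period collapse among multiples of $2^d$ that produces the threshold $n = 2^{r+1}$ -- is the hardest part. I expect this requires an induction on $r$ together with a close examination of rows whose binary expansion carries several nonzero bits, pushing beyond the explicit cases treated in section~\ref{rows}.
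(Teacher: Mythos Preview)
The paper gives no proof of this proposition: it is quoted as Theorem~3.6 of Dr\'apal~\cite{Dra1} and immediately used as a black box (for Corollary~\ref{cordraphom} and the discussion around it). There is thus no in-paper argument to compare your plan against; one can only assess the plan on its own terms. Your Steps~1 and~2 are correct and are the natural opening: since $([1,2^n],\star_n)$ is the monogenic LD-system with relation $1^{(2^n+1)}=1$, the homomorphism property reduces to checking that the row of $2^d-1$ in $\star_{n+d}$ is $2^d,\,2\cdot 2^d,\dots,2^{n+d}$, and $(\ast\ast)$ is an equivalent inductive reformulation.

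Step~3, however, rests on an incorrect heuristic. You claim that for $p=2^dk$ the identity $p\star_{n+d}2^d=p+2^d$ holds ``as long as $\pi_{n+d}(p)$ is large enough'' and fails once the period drops to $\le 2^d$. But a period exceeding $2^d$ only says the row of $p$ has not yet reached $2^{n+d}$ by column~$2^d$; it says nothing about the \emph{value} there. Take $d=1$ in $[1,16]$: then $p=2$ has $\pi_4(2)=4>2$, yet $2\star_4 2=12\ne 4$, so $(\ast\ast)$ already fails at $k=1$ with your period criterion satisfied. The obstruction to $\sigma_1:[1,8]\to[1,16]$ is not a period collapse among the multiples of~$2$ but the shape of the row of $2^d-1$ itself (here $1\star_4 2=12$). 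Two further issues: Corollary~\ref{seuil2} is stated for $*$ and does not transfer to $\star_n$ (e.g.\ $4\star_3 2=6$ is not a power of~$2$), and Proposition~\ref{row3}(2) only handles elements with at most three binary digits, while the element you need to analyse, $2^d-1$, has $d$ ones. Dr\'apal's original proof proceeds by a direct structural analysis of that specific row; the tools gathered in this paper are not enough, on their own, to carry out your Step~3 as outlined.
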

Applying Proposition \ref{draphom}  for $d=4$ and $n=4$ on sees that the map $p\mapsto 16p$ is a homomorphism from $[1,16]$ to $[1,256]$ therefore the period of $256-16q$ in the Laver table of size $256$ is $16$ times the period of $16-q$ in the Laver table of size $16$. This gives an explanation for the fact that the first row multiplied by 16 is equal to the last column. A corollary of Proposition \ref{draphom} is the following.
\begin{cor}\label{cordraphom}
For any integers $r\in[0,2^{2^n}[$ and $q\geq 0$ one has
$$2^{2^n}((1+r)*q)=(1+r2^{2^n})*q.$$
\end{cor}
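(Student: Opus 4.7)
The plan is to deduce the corollary from Proposition \ref{draphom} applied with $d=2^n$, combined with the row formula (\ref{puissancesback}). Since $d=2^n$ and the largest power of $2$ dividing $d$ is $2^n$ itself, the condition in Proposition \ref{draphom} becomes $N\leq 2^{n+1}$; choosing $N=2^n$ yields a homomorphism
\[
\sigma_{2^n}\colon\bigl([0,2^{2^n}-1],*\bigr)\longrightarrow\bigl([0,2^{2^{n+1}}-1],*\bigr),\qquad \sigma_{2^n}(p)=2^{2^n}p.
\]
Being a homomorphism, $\sigma_{2^n}$ commutes with the left-power operation: a trivial induction on $k$ gives $\sigma_{2^n}(x^{(k)})=\sigma_{2^n}(x)^{(k)}$, provided the successive iterates $x^{(j)}$, $j\leq k$, all lie in the domain $[0,2^{2^n}-1]$.

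Set $P=1+r\,2^{2^n}$ and $P'=1+r$. The crucial observation is that
\[
P-1=r\cdot 2^{2^n}=\sigma_{2^n}(r)=\sigma_{2^n}(P'-1).
\]
Taking $L$ large enough that $2^L\geq\pi(P)$ (which automatically gives $2^L\geq\pi(P')$) and applying (\ref{puissancesback}) to both $P$ and $P'$, one obtains, for $0\leq q\leq 2^L$,
\[
P*(2^L-q)=(P-1)^{(q)}=\sigma_{2^n}\!\bigl((P'-1)^{(q)}\bigr)=2^{2^n}\bigl(P'*(2^L-q)\bigr),
\]
where the middle equality is the commutation of $\sigma_{2^n}$ with left powers applied to $P'-1=r$. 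Substituting $q'=2^L-q$ and letting $L$ be arbitrarily large covers every nonnegative integer $q'$, giving the desired identity $(1+r\,2^{2^n})*q'=2^{2^n}((1+r)*q')$.

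The only nontrivial step to check is that the iterates $(P'-1)^{(k)}=r^{(k)}$ actually remain in $[0,2^{2^n}-1]$, so that $\sigma_{2^n}$ can be applied at each stage. But this is immediate from (\ref{binaire}): $r^{(k+1)}=r^{(k)}*r\sqsubset r-1$ shows $r^{(k)}\leq r<2^{2^n}$ for all $k\geq 1$. The technical heart of the argument is thus just this binary-expansion constraint, combined with the homomorphism property supplied by Proposition \ref{draphom}.
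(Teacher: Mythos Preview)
Your argument is essentially the same as the paper's: apply Proposition~\ref{draphom} with $d=2^n$ together with the left-power formula (\ref{puissancesback}), using that $\sigma_{2^n}$ intertwines the left powers. The paper compresses all of this into the single line $(1+r2^{2^n})*q=(r2^{2^n})^{(q)}=2^{2^n}\,r^{(q)}=2^{2^n}((1+r)*q)$; your version is simply the careful unpacking of that line, with the substitution $q\mapsto 2^L-q$ made explicit.

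One small slip: in your last paragraph you invoke (\ref{binaire}) to claim $r^{(k+1)}=r^{(k)}*r\sqsubset r-1$, but (\ref{binaire}) says $p*q\sqsubset p-1$, so what you actually get is $r^{(k)}*r\sqsubset r^{(k)}-1$, not $r-1$. The conclusion you need, that $r^{(k)}\in[0,2^{2^n}-1]$, is nevertheless correct: since $r^{(k)}=(1+r)*(2^L-k)$, formula (\ref{binaire}) applied to $p=1+r$ gives $(1+r)*q\sqsubset r$ directly, hence $r^{(k)}\le r<2^{2^n}$. With this fix the argument is complete.
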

\begin{proof}
Applying  (\ref{puissancesback}) and Proposition \ref{draphom} one gets

$(1+r2^{2^n})*q=(r2^{2^n})^{(q)}=2^{2^n}\left(r^{(q)}\right)=2^{2^n}((1+r)*q).$
\end{proof}
It follows from this Corollary  that the first row and the first column of the table above coincide.
These results can be further generalized and several more sophisticated explicit homomorphisms have been constructed by
Dr\'apal (see e.g. \cite{Dra2}, \cite{Dra4}) and Dougherty (see \cite{Doug}, we will use one of these results in section \ref{Max}).
Looking for more homomorphisms could be of use for solving the questions we are going to expose in the next section.
\subsection{Asymptotic properties of Laver tables}
For any $p\geq 1$, the sequence of periods 
$\pi_n(p)$ (which is defined for $n$ large enough) satisfies $\pi_{n+1}(p)=\pi_n(p)$ or $2\pi_n(p)$, in particular it is nondecreasing.  It is therefore natural to ask whether it remains bounded as $n$ goes to infinity. By work of 
Laver \cite{La}, \cite{La2},  Dougherty  and Jech \cite{DougJech} one knows that this  is equivalent to  the freeness of   the projective limit of Laver tables, more precisely, $\pi_n(1)\to\infty$ if and only if $\pi_n(p)\to\infty$ for all $p$, if and only if  the projective limit of the Laver tables is the free left distributive system generated by $1$. Moreover, if one assumes the existence of a limit ordinal $\lambda$ and a nontrivial elementary embedding 
 from $V_\lambda$ into itself, then all these equivalent statements hold.  No direct proof of this (i.e. not using the Laver axiom) has been found and  by \cite{DougJech}  such a proof does not exist in primitive recursive arithmetic. In fact the function which maps $m$ to the smallest $n$ such that $\pi_n(1)=2^m$ grows faster than any primitive recursive function.  For example, it is easy to compute the first values of $\pi_n(1)$ and see that $\pi_9(1)=16$, however Dougherty \cite{Dou} has given an amazing lower bound:
he proved that, if there exists an $n$ such that $\pi_n(1)=32$ then $n>f_9(f_8(f_8(254)))$ where $f_x(y)$ is a variant of the Ackerman function. This number, if it exists, is thus incredibly large. 
However it is not known whether the existence of such an $n$ can be proved without Laver's axiom.
Finding proofs of these statements from a combinatorial approach is a very challenging task. Some interesting attempts have been made by Dr\'apal \cite{Dra2} and Dougherty \cite{Doug} but for the moment a full proof seems out of reach.
The considerations above give an  answer to a question we raised in section \ref{LD1}. There we mentioned the problem of constructing a
family $I$ (not reduced to the identity) of injections  of a set $X$ into itself, endowed with a binary left distributive operation $\star:I\times I\to I$, such that, for any $\iota,\eta\in I$ and $x\in X$ one has
$\iota\star\eta(\iota(x))=\iota(\eta(x))$, as in (\ref{function}). As shown by Dougherty and Jech \cite{DougJech}, assuming that the projective limit of Laver tables is free, one can  construct in the following way such a family. Let $W$ be the free left distributive system with one generator. For any $w\in W$ one can look at its image in the Laver table of order $2^m$. Let $e_w(n)$ be the largest integer $m$ such that $w\star_m2^n=2^m$ in the Laver table of order $2^m$. Since the period $\pi_m(2^n)$ goes to infinity as $m\to\infty$ (by Laver's result) this is well defined and gives a family $e_w;w\in W$ of embeddings of $\bf N$ into itself. This family is endowed with a left-distributive operation inherited from $W$. Dougherty and Jech have proved that for any pair of such embeddings the property
$\iota\star\eta(\iota(n))=\iota(\eta(n))$ is satisfied. Actually they prove much more properties of these objects, see \cite{DougJech}.

Another natural question that one can ask is whether one can say something about Laver tables seen from very far, i.e. do the Laver tables satisfy some interesting statistical properties for large $n$? A first result in this direction is the subject of the next section.

\section{Asymptotic frequencies}\label{freq}
For $n\geq k$ let $N_k(n)$ be the number of $p\in[1,2^{n}]$ whose period is $2^k$ and $\omega_k(n)=N_k(n)/2^n$ be the frequency of the period $2^k$ in the Laver table of order $2^n$. Thus $\sum_{k=0}^n\omega_k(n)=1$. 

\begin{proposition} For any $k$ the limit $\omega_k=\lim_{n\to\infty}\omega_k(n)$ exists.
\end{proposition}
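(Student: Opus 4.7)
The plan is to work with the cumulative distribution $F_k(n) := \omega_0(n) + \omega_1(n) + \cdots + \omega_k(n)$ and show that it is eventually non-increasing in $n$. Since $\omega_k(n) = F_k(n) - F_{k-1}(n)$ (with the convention $F_{-1}(n) = 0$), it suffices to prove that each $F_k(n)$ converges.

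Observe that
$$F_k(n) = \frac{1}{2^n}\,\bigl|\{p \in [1,2^n] : \pi(p) \leq 2^k\}\bigr|.$$
The main step is to show that $F_k(n+1) \leq F_k(n)$ for every $n \geq k$. Split $[1,2^{n+1}] = [1,2^n] \sqcup \{p + 2^n : p \in [1,2^n]\}$ and count the elements of each piece whose period is at most $2^k$. For the lower half, the count is exactly $2^n F_k(n)$. For the upper half, Proposition~\ref{seuil} (applied to $p \in [1, 2^n - 1]$) gives $\pi(p+2^n) \in \{\pi(p), 2\pi(p)\}$, so $\pi(p+2^n) \geq \pi(p)$, while the only remaining upper-half element $p + 2^n = 2^{n+1}$ has period $2^{n+1} > 2^k$ and contributes nothing. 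Consequently
$$\bigl|\{q \in [1,2^{n+1}] : \pi(q) \leq 2^k\}\bigr| \leq 2\,\bigl|\{p \in [1,2^n] : \pi(p) \leq 2^k\}\bigr|,$$
which after division by $2^{n+1}$ yields $F_k(n+1) \leq F_k(n)$.

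Since $F_k(n)$ is monotone for $n \geq k$ and lies in $[0,1]$, it converges to some $F_k$, and $\omega_k := F_k - F_{k-1}$ is the required limit.

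There is essentially no serious obstacle: the whole argument rests on Proposition~\ref{seuil} (periods can only be preserved or doubled under the embedding $p \mapsto p + 2^n$), plus the passage from $\omega_k$ to the cumulative $F_k$, which turns the "monotone flow of mass from lower to higher periods" into a genuine monotone scalar sequence. The only minor bookkeeping is checking the two boundary elements $p = 2^n$ and $p = 2^{n+1}$, whose periods are $2^n$ and $2^{n+1}$ and hence automatically excluded once $n \geq k$.
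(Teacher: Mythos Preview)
Your proof is correct and takes a cleaner route than the paper's. The paper introduces the auxiliary counts $P_k(n)$ (the number of $p\le 2^{n-1}$ whose period jumps from $2^{k-1}$ to $2^k$ when passing to $p+2^{n-1}$), derives the recursion $\omega_k(n)=\omega_k(n-1)+(P_k(n)-P_{k+1}(n))/2^n$, and then argues by induction on $k$ that each series $\sum_m P_k(m)/2^m$ converges, using only the nonnegativity of the $\omega_j(n)$. Your argument short-circuits this induction: by passing to the cumulative $F_k$ the $P_j$-terms telescope and one is left with the single inequality $F_k(n+1)\le F_k(n)$ (in fact $F_k(n)-F_k(n+1)=P_{k+1}(n+1)/2^{n+1}$ up to the boundary element), which follows immediately from the period-doubling dichotomy. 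The upper bound $\sum_{j\le k}\omega_j\le \sum_{j\le k}\omega_j(n)$ that the paper extracts after its proof is exactly your monotonicity $F_k\le F_k(n)$, so you recover that for free; what the paper's longer computation buys in addition is the closed-form expression $\omega_k=2^{-k}+\sum_{m>k}(P_k(m)-P_{k+1}(m))/2^m$, which your argument does not produce directly.
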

\begin{proof}
Let $P_k(n)$ be the number of $p\in[1,2^{n-1}]$ such that $\pi(p)=2^{k-1}$ and $\pi(p+2^{n-1})=2^k$. Since $\pi(p+2^{n-1})=\pi(p)$ or $2\pi(p)$ one has 
$$N_k(n)=2N_k(n-1)+P_{k}(n)-P_{k+1}(n)$$ and $$\omega_k(n)=\omega_k(n-1)+(P_{k}(n)-P_{k+1}(n))/2^n.$$
The only $p$ with $\pi(p)=1$ is $p=1$ and the only $p$ with $\pi(p)=2$ are the numbers
$p=1+2^k;k=0,1,2,\ldots$. It follows that $N_0(n)=1$ and $N_1(n)=n$.
Looking at the table of order 4 we see that  $\omega_2(2)=1/4$ , moreover from Proposition \ref{double} we see that
 $P_2(n)=N_1(n-1)=n-1$ for $n\geq 3$ therefore
$$
\begin{array}{rcl}
\omega_0(n)&=&\frac{1}{2^n}\to \omega_0=0\quad\text{as}\ n\to\infty\\
\omega_1(n)&=&\frac{n-1}{2^n}\to \omega_1=0\quad\text{as}\ n\to\infty\\
\omega_2(n)&=&1/4+\sum_{m=3}^n\frac{m-1}{2^m}-\frac{P_3(m)}{2^m}.
\end{array}
$$
Since $\omega_2(n)\geq 0$ one has 
$$\sum_{m=2}^n\frac{P_3(m)}{2^m}\leq 1/4+\sum_{m=3}^n\frac{m-1}{2^m}<1/4+\sum_{m=3}^\infty\frac{m-1}{2^m}=1$$
therefore the series $\sum\frac{P_3(m)}{2^m}$ converges and 
$$\omega_2(n)\to\omega_2=1-\sum_{m=3}^\infty\frac{P_3(m)}{2^m}\quad \text{as}\ n\to\infty$$
One has also
$$\omega_3(n)=1/8+\sum_{m=4}^n\frac{P_3(m)}{2^m}-\frac{P_4(m)}{2^m}$$
therefore 
 $$\sum_{m=4}^n\frac{P_4(m)}{2^m}\leq 1/8+\sum_{m=4}^\infty\frac{P_3(m)}{2^m}$$ and the series 
$\sum_{m=4}^\infty\frac{P_4(m)}{2^m}$ converges. It follows that

$$\omega_3(n)\to 1/8+\sum_{m=4}^\infty\frac{P_3(m)}{2^m}-\frac{P_4(m)}{2^m}\quad\text{as}\ n\to\infty$$
An obvious induction now yields the result for all $k$ as well as the explicit expressions
$$\omega_k= 2^{-k}+\sum_{m=k+1}^\infty\frac{P_k(m)}{2^m}-\frac{P_{k+1}(m)}{2^m}$$
where all series $\sum_m\frac{P_k(m)}{2^m}$ converge.
\end{proof}

Note that, by  the same argument, one can also  obtain the convergence of frequencies $\omega_{k,l,m}$ of the set of $p\leq 2^n$, with period $2^k$ satisfying,
$p=m\mod 2^l$ for some fixed $l$ and $m\in[1,2^l]$.
The argument above is very simple, moreover one can also derive from it upper bounds on the values of the asymptotic frequencies, for example one has 
$$\sum_{k=1}^l\omega_k=1-\sum_{m=l+1}^\infty\frac{P_{l+1}(m)}{2^m}\leq \sum_{k=1}^l\omega_k(n)$$so that knowing the values of the $\omega_k(n)$ for some $n$ gives upper bounds on the asymptotic frequencies.
Unfortunately this proof does not give any useful lower bounds on the asymptotic frequencies. Indeed, at this point, we cannot exclude that all $\omega_k$ may be zero.  Here are some numerical values of the frequencies (expressed as percentages to gain readability) for $n=22$ and $n=31$:

\begin{center}
\begin{tabular}{|c|c|c|c|c|c|c|}
\hline
$\pi_n$&2&4&8&16&32
\\
\hline
$n=22$&0.000572 &52.936697 &10.196733 &30.197978 & 0.002623 
\\
\hline
$n=31$&0.000002 & 52.936599 & 10.193012 & 30.202195 & 0.001429 
\\
\hline
$\pi_n$&64&128&256&512&1024
\\
\hline
$n=22$ & 3.550982 & 0.684047 & 2.035284 & 0.000763&  0.209165
\\
\hline
$n=31$ & 3.551050 & 0.679749 & 2.040756 & 0.000003& 0.209228
\\
\hline
\end{tabular}
\end{center}

Arguments of Dougherty in \cite{Dou} show that one can expect the frequencies $\pi_n(p)$ to grow very slowly with $n$,  therefore that the numbers $P_k(n)$ might be small compared to $2^n$.
This would imply that not only some of the $\omega_k$ are nonzero, but also that 
$\sum_n\omega_n=1$, therefore they define a probability distribution on the positive integers.
 Here are some values of ${\mathcal P }(n)=\sum_k P_n(k)$ which is the number of $p$ whose period doubles between $A_n$ and $A_{n+1}$.

\begin{center}
\begin{tabular}{|c|c|c|c|c|c|c|c|}
\hline
$n$&4&8&12&16&20&24&28
\\
\hline
${\mathcal P}(n)$&16 & 58 &147 &336 & 650&1201&2249
\\
\hline
$2^{4+n/4}$&32 & 64 &128 &256 &512 & 1024& 2048
\\
\hline
\end{tabular}
\end{center}
On this small sample the formula  $2^{n/4+4}$ seems to give a rough approximation of ${\mathcal P}(n)$ which, if it remains valid for large values of $n$,  would be enough for the probability measure to put zero mass at infinity.

The existence of this probability measure, which is canonical, in the sense that it does not depend on any external parameter like a mean or a variance, raises challenging and completely open questions:
Can one characterize this probability distribution, or give a formula for it?  Does it appear in other questions of mathematics or physics?

\section{Maximal elements}\label{Max}
\subsection{Definition of maximal elements}
Let $bit(n)$ is the number of ones in the binary expansion of an integer $n$. The set of numbers $r$ satisfying
$r\sqsubset n$ has $2^{bit(n)}$ elements.
We have seen that for any $p$ the numbers $p*q$ satisfy 
$p*q\sqsubset p-1$ therefore
\begin{equation}\label{bit}
\pi(p)\leq 2^{bit(p-1)}.
\end{equation}
\begin{definition}
A positive integer $p$ is called {\sl maximal} if $\pi(p)= 2^{bit(p-1)}$.
\end{definition}
 It follows from section \ref{rows} that, if $p$ has at most two ones in its binary expansion, then $p$ is maximal. If it has three ones in its binary expansion then it is maximal if and only if the largest power of $2$ which divides it is of the form $2^{2m}$ for some integer $m\geq 1$.
The purpose of this section is to determine the set of all maximal $p$'s. As a consequence of the description of these elements, we will see that they form a subset  stable under the binary operations $*$ and $\circ$.
Let $p$ be a maximal element, it follows from the definition that the numbers $p*q,q=0,1,2\ldots \pi(p)-1$ are all the integers whose binary expansion is contained in that of $p-1$, listed in increasing order. This gives a simple algorithm for the  computation of $p*q$. As an example,  assume $p$   has binary expansion $p=1010110000111100000001$ (we will see below that $p$ is maximal, thus $\pi(p)=2^8$) and let
 $q$ have binary expansion $11000101$.  The binary expansion of 
$p*q$ is obtained by 
writing the binary expansion of $q$ below the $1$'s of $p-1$ then keeping only the $1$'s of $p-1$ which match a $1$ of $q$, as shown below:

\medskip

$$
\begin{array}{ccccccccccccccccccccccccccccccccccccc}
p-1&1&0&1&0&1&1&0&0&0&0&1&1&1&1&0&0&0&0&0&0&0&0\\
q& 1&&1&&0&0&&&&&   0&1&0 & 1&\\
p*q&1&0&1&0&0&0&0&0&0&0&0&1&0&1&0&0&0&0&0&0&0&0\\
\end{array}
$$

 Note also that, by Proposition \ref{power1}, if $p$ is maximal then all  the numbers $p\mod(2^m)$ are maximal, moreover if we write
$p=2^n+q$ with $q<2^n$ then for any $l$ such that $2^l>q$ the number $2^l+q$ is also maximal. It follows that it is enough to describe all maximal elements such that $2^{n-1}+2^{n-2}< p\leq 2^n$ for some $n$. 

\subsection{Characterization of maximal elements}
Recall that a partition of an integer $n$ is a sequence of integers $\lambda_1\geq \lambda_2\geq \ldots\geq 0$
such that $n=\sum_i\lambda_i$. A binary partition is a partition in which all $\lambda_i$ are powers of $2$.
Since any binary partition of an odd integer must contain a $1$, the number of binary partitions of $2n+1$ is equal to the number of binary partitions of $2n$. The numbers of binary partitions of $n$ form sequence A018819 in OEIS, see \cite{OEIS}. Their first values are $1, 1, 2, 2, 4, 4, 6, 6, 10, 10, 14, 14, 20, 20,  \ldots$. These numbers satisfy the recursion
$$a(2m+1)=a(2m)=a(2m-1)+a(m).$$

\begin{theorem} For any $n\geq 2$ the maximal elements $p\in]2^{n-1}+2^{n-2},2^n]$ are in bijection with the binary partitions with sum  $n-1$: if 
$$n-1=(b_1+1)2^{a_1}+(b_2+1)2^{a_2}+\ldots+(b_r+1)2^{a_r},$$ with $a_1<a_2<a_3<\ldots<a_r$ and $b_i\geq 0$, is a binary partition of sum $n-1$ (where $b_i+1$ is the multiplicity of $2^{a_i}$ in the partition) then the binary word
\begin{equation}\label{maximal}
11^{2^{a_1}}0^{b_12^{a_1}}1^{2^{a_2}}0^{b_22^{a_2}}\ldots 1^{2^{a_r}}0^{b_r2^{a_r}}
\end{equation} 
is the binary expansion of $p-1$, where $p$ is a maximal element of period $2^m$ with $m=1+2^{a_1}+\ldots +2^{a_r}$, and all maximal elements in $]2^{n-2}+2^{n-1},2^n]$ are of this form.
\end{theorem}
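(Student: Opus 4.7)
The plan is to proceed by induction on $n$, exploiting the recursive structure of the Laver table and the tools developed in Sections \ref{further} and \ref{rows}. The base case consists of small values of $n$, verified directly against the tables displayed earlier in the paper.

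For the inductive step, the essential observation is a consequence of (\ref{bit}) and the dichotomy stated just before Proposition \ref{seuil}: if $p = 2^N + q$ with $1 \leq q < 2^N$, then $\text{bit}(p - 1) = 1 + \text{bit}(q - 1)$ and $\pi(p) \in \{\pi(q), 2\pi(q)\}$, so $p$ is maximal if and only if $q$ is maximal and the doubling $\pi(p) = 2 \pi(q)$ holds, which by Proposition \ref{seuil} is equivalent to $\theta(p) = \pi(q)$. Applied iteratively as one reads the binary expansion of $p - 1$ from high to low, this reduces the problem to identifying which bit patterns produce a doubling at every stage. Proposition \ref{power1} moreover allows us to restrict attention to the top quarter $]2^{n-1} + 2^{n-2}, 2^n]$.

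I would then argue that the doubling condition forces a rigid \emph{alignment} on each newly added block of consecutive ones: a block of $2^{a_j}$ ones can trigger doubling only if it occupies bit positions that are multiples of $2^{a_j}$ and is separated from the previous $1$-block by a run of zeros whose length is a positive multiple of the preceding block size. This alignment is exactly what is required to apply the scaling homomorphism $\sigma_d$ of Proposition \ref{draphom} to the small explicit maximal elements of Propositions \ref{double} and \ref{row3}, mapping them into the relevant row of the larger table. Conversely, if any alignment fails, Corollary \ref{seuil2} combined with Proposition \ref{seuil} forces $\theta$ strictly below $\pi$, so the period does not double; since every subsequent $1$-bit addition can at most double $\pi$ while incrementing $\text{bit}(p-1)$ by one, the gap cannot be recovered and $\pi(p) < 2^{\text{bit}(p-1)}$.

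The admissible patterns are thus exactly those displayed in (\ref{maximal}), parameterized by binary partitions of $n - 1$. The main obstacle will be the precise alignment analysis: one must carefully relate the multiplicative structure encoded by $\sigma_d$ to the additive block structure of the partition, and handle the special top block of length $1 + 2^{a_1}$, which incorporates the leading singleton ``$1$'' before the first block $1^{2^{a_1}}$. As a consistency check, the count of admissible patterns in the top quarter should match the binary partition function $a(n-1)$, which satisfies $a(2m+1) = a(2m) = a(2m-1) + a(m)$ and provides an independent enumeration.
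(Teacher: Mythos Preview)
Your reduction to the bit-by-bit doubling criterion is correct and matches the paper's starting point, but the tools you invoke after that are not strong enough to carry either direction. The paper does \emph{not} use the scaling homomorphism $\sigma_d$ of Proposition \ref{draphom} here at all; the engine of the proof is Dougherty's Theorem \ref{Doughom}. Part (ii) of that theorem lets one strip off a bottom block $1^{2^{a}}0^{b2^{a}}$ from $p-1$ and identify the period of $p$ with that of an element built from the remaining upper blocks, which is exactly what the induction on $m=1+\sum 2^{a_i}$ needs. The map $\sigma_d$ only multiplies by $2^d$, i.e.\ appends a run of zeros; it cannot manufacture an element with more than three $1$-blocks out of the two- and three-bit elements of Propositions \ref{double} and \ref{row3}, nor does it relate periods across different block configurations. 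So your ``lift small maximal elements via $\sigma_d$'' step does not reach the general pattern (\ref{maximal}).

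The converse direction has the same gap. To show that a misaligned pattern is \emph{not} maximal you need an actual upper bound on $\pi(p)$ that is strictly below $2^{bit(p-1)}$. Corollary \ref{seuil2} and Proposition \ref{seuil} only give the threshold dichotomy and the fact that $p*2^k$ is a power of $2$; they do not by themselves bound $\pi(p)$. In the paper this bound comes from part (i) of Theorem \ref{Doughom} (if $p-1=a2^{2^k}+b$ with $a<2^{2^k}$ and $b<2^{2^k}-1$ then $\pi(p)\le 2^{2^k}$), packaged as Lemma \ref{maximal3}, and then propagated block by block via part (ii). Without an input of that strength your ``alignment fails $\Rightarrow$ no doubling'' claim is an assertion, not an argument. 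Also, a small slip: in your alignment description the length of the zero run after the $j$th block of ones must be a multiple of the \emph{current} block size $2^{a_j}$, not the preceding one, and that multiple may be zero.
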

\begin{cor}The set of maximal elements is the set of $p$ such that $p-1$ has binary expansion of the form
\begin{equation}\label{maximal2}
10^{b_0}1^{2^{a_1}}0^{b_12^{a_1}}1^{2^{a_2}}0^{b_22^{a_2}}\ldots 1^{2^{a_r}}0^{b_r2^{a_r}}.
\end{equation}
\end{cor}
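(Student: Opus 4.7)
The plan is to deduce the corollary from the theorem using the shift invariance recorded immediately before the theorem: if $p = 2^n + q$ with $q < 2^n$ is maximal then $2^l + q$ is also maximal for every $l$ with $2^l > q$. This lets us move the top bit of $p$ to any position strictly above all its other bits without changing maximality, which in the corollary's form corresponds to varying the parameter $b_0$ freely while holding $a_1,\dots,a_r,b_1,\dots,b_r$ fixed.

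For the ``if'' direction, suppose $p-1 = 1\cdot 0^{b_0}\cdot 1^{2^{a_1}}0^{b_1 2^{a_1}}\cdots 1^{2^{a_r}}0^{b_r 2^{a_r}}$. If $b_0=0$ this is the theorem's form and $p$ is maximal. Otherwise let $Y$ denote the suffix $1^{2^{a_1}}0^{b_1 2^{a_1}}\cdots 1^{2^{a_r}}0^{b_r 2^{a_r}}$ read as an integer of binary length $L$, and let $\tilde p - 1 = 2^L + Y$ be the corresponding theorem-form element. Setting $q = Y+1$ and assuming $Y < 2^L - 1$, we have $q < 2^L \leq 2^{L+b_0}$, so both $\tilde p = 2^L + q$ and $p = 2^{L+b_0}+q$ lie in the shift family of $q$ and are simultaneously maximal. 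The remaining case $Y = 2^L - 1$ forces $p = 2^L + 2^{L+b_0}$, whose maximality is an immediate application of Proposition \ref{double}.

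For the converse, let $p$ be maximal and write $p-1 = 2^{N-1} + Y$ with $Y<2^{N-1}$. If $Y$'s top bit sits at position $N-2$, then $p \in ]2^{N-1}+2^{N-2},2^N]$ and the theorem applies directly, giving the corollary form with $b_0=0$. Otherwise let $M\leq N-3$ denote the top bit position of $Y$ (with the convention $M=-1$ when $Y=0$). Assuming $Y\neq 2^{M+1}-1$, apply the shift observation to $p = 2^{N-1}+(Y+1)$ with $q=Y+1 < 2^{M+1}$; this yields a maximal $\tilde p = 2^{M+1}+(Y+1)$ whose $\tilde p-1 = 2^{M+1}+Y$ begins with $11$, and the theorem decomposes it as $\tilde p-1 = 1 \cdot 1^{2^{a_1}} 0^{b_1 2^{a_1}} \cdots 1^{2^{a_r}} 0^{b_r 2^{a_r}}$. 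Stripping the leading $1$ recovers the binary expansion of $Y$, and prepending $1\cdot 0^{N-M-2}$ gives the corollary form for $p-1$. The degenerate case $Y = 2^{M+1}-1$ simply gives $p = 2^{N-1} + 2^{M+1}$, whose binary expansion $p-1 = 1\cdot 0^{N-M-2}\cdot 1^{M+1}$ is already in corollary form once $M+1$ is written in binary as $2^{a_1}+\cdots+2^{a_r}$ with all $b_i=0$.

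The one subtlety is the edge case $Y = 2^L-1$, equivalently ``$p$ has exactly two nonzero bits in its binary expansion''. Here the shift observation cannot bridge $\tilde p$ and $p$ because the required strict inequality $2^L > q$ degenerates to $2^L = q$; however this is precisely the situation governed by Proposition \ref{double}, which furnishes both the period and the binary shape of $p-1$ outright.
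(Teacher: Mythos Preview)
Your argument is correct and is exactly the approach the paper has in mind: the paper does not give a separate proof of the corollary, relying instead on the observation just before the theorem that Proposition~\ref{power1} lets one shift the top bit of $p$ freely, so that characterizing maximal $p$ in the intervals $]2^{n-1}+2^{n-2},2^n]$ suffices. You have spelled this reduction out carefully, including the edge case $Y=2^L-1$ (equivalently $p$ a sum of two powers of $2$) where the shift inequality degenerates and one falls back on Proposition~\ref{double}.
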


The proof relies on the following results of Dougherty (see Theorem 4.1 and Lemma 4.5 in \cite{Doug}).
\begin{theorem}\label{Doughom}\

\begin{enumerate}[label=(\roman*)]
\item
If $p-1=a2^{2^k}+b$ with $a<2^{2^k}$ and $b<2^{2^k}-1$ then $\pi(p)\leq 2^{2^k}$.
\item
Let $p=x+1+2^{mn}y$ where $n=2^k$ is a power of $2$ and $x=z2^{(m+1)n}$ for some positive integer $z$ and $y<2^n$. Let $2^l$ be the period of $x+1$ and suppose $l\leq n$, then $p$ has the same period as  
$2^{l+n}-2^n+y+1$ moreover $(2^{l+n}-2^n+y+1)*q=2^nw+y'$ with $y'<2^n$ if and only if
$p*q=(x+1)*w+2^{mn}y'$.
\end{enumerate}
\end{theorem}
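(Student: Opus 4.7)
Setting $K=2^k$, part (i) asks for $\pi(p)\le 2^K$, equivalently $p*2^K=0$. I would first pin down the possible shape of $p*2^K$: by (\ref{modulo2}), $(p*2^K)\bmod 2^K=(b+1)*0=0$, so $2^K\mid p*2^K$; by (\ref{binaire}), $p*2^K\sqsubset p-1=a\cdot 2^K+b$, so its nonzero bits lie in the upper block; and Corollary \ref{seuil2} forces $p*2^K$ to be either $0$ or a single power $2^{K+i}$ with $i\in\mathrm{supp}(a)$. Then induct on $a$: when $a=0$, $p\le 2^K-1$ and (\ref{bit}) gives $\pi(p)\le 2^K$. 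For $a\ge 1$, let $2^{a_s}$ be the top bit of $a$ and set $p_1=p-2^{K+a_s}$, noting $p_1-1=(a-2^{a_s})\cdot 2^K+b$ still satisfies the hypotheses, so by induction $\pi(p_1)\le 2^K$. Proposition \ref{seuil} applied to $p_1<2^{K+a_s}$ then gives $\pi(p)\in\{\pi(p_1),2\pi(p_1)\}$, and the crux is to exclude the doubling alternative. In that case Proposition \ref{seuil} forces $\theta(p)=\pi(p_1)$, i.e., $p*r=p_1*r+2^{K+a_s}$ for all $r\in[0,\pi(p_1))$, and combining this with Corollary \ref{seuil2} and Corollary \ref{cordraphom} (whose hypotheses are met precisely because $K=2^k$) should yield an identity forcing $b+1=2^K$, i.e., $b=2^K-1$, contradicting the assumption $b<2^K-1$.

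For part (ii), I would induct on $q$, writing $P'=2^{l+n}-2^n+y+1$. The base $q=0$ is immediate with $w=y'=0$. For the inductive step, apply the recursion (\ref{recurrence}) simultaneously to $p$ and $P'$ to pass between consecutive values of $q$. The hypothesis $x=z\cdot 2^{(m+1)n}$ makes $p-1=2^{mn}y+z\cdot 2^{(m+1)n}$ cleanly split into a middle block of width $n$ at position $mn$ (holding $y$) and an upper block starting at position $(m+1)n$ (holding $z$); similarly $P'-1=(2^l-1)\cdot 2^n+y$ has $y$ in the bottom block and an all-ones pattern of length $l\le n$ in the second block. The transport identities from Corollary \ref{cordraphom} and its block-analogues (rooted in Proposition \ref{draphom}, valid since $n=2^k$) govern how multiplication by $2^n$ interacts with $*$, and let the identity $P'*q=2^n w+y'$ lift to $p*q=(x+1)*w+2^{mn}y'$ via the block structure, preserved step by step under the iterated $*(p-1)$. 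The assumption $l\le n$ ensures that the period of $x+1$ fits inside one block of width $n$, so the bookkeeping of $w$ never overflows its block, and the equality of periods $\pi(p)=\pi(P')$ drops out.

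The main obstacle is the doubling-exclusion in part (i): one needs a careful traceback through (\ref{recurrence}) showing that the hypothesized $p*2^K=2^{K+i}\ne 0$ propagates back to force all bits of the lower block of $p-1$ to be set, namely $b=2^K-1$. Concretely I would reconstruct $p*(2^K-1),\,p*(2^K-2),\ldots,\,p*1$ from the doubling hypothesis, match each against the binary expansion of $p-1$, and use the $K=2^k$ structure of the Dr\'apal-type identities to close the argument. Part (ii) is mainly disciplined bookkeeping — the needed algebraic ingredients (left distributivity, block transport from Corollary \ref{cordraphom}, and the recursion (\ref{recurrence})) are in hand — but tracking the three regions (below $mn$, middle block $[mn,(m+1)n)$, and upper part beyond $(m+1)n$) through a general $q$ requires care, especially when left distributivity mixes bits across block boundaries.
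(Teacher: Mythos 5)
First, a point of reference: the paper does not prove this theorem at all --- it is imported verbatim from Dougherty (\cite{Doug}, Theorem 4.1 and Lemma 4.5), so there is no in-paper proof to match your argument against. Judged on its own terms, your proposal is a reasonable plan of attack but not a proof: both parts terminate in ``should yield an identity'' / ``requires care'' at exactly the points where the entire difficulty is concentrated. For part (i), the preliminary reductions are fine ($2^{2^k}\mid p*2^{2^k}$ by (\ref{modulo2}), $p*2^{2^k}\sqsubset p-1$ by (\ref{binaire}), single power of $2$ by Corollary \ref{seuil2}, induction on the top bit of $a$ via Proposition \ref{seuil}), but the doubling-exclusion you defer is the whole theorem. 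The statement is genuinely false if the exponent is not of the form $2^k$: with $K=3$, $p=52=2^2+2^4+2^5$ has $p-1=51=6\cdot 8+3$ with $a=6<8$ and $b=3<7$, yet $\pi(52)=16>8$ by Proposition \ref{row3} (and the $16\times 16$ period table). So any correct argument must use $K=2^k$ in an essential, quantitative way; pointing at Corollary \ref{cordraphom} ``whose hypotheses are met'' is not such an argument, and that corollary only handles the case $b=0$ directly. Nothing in your outline explains why the traceback through (\ref{recurrence}) forces $b=2^K-1$ when $K$ is a power of two but fails to do so when $K=3$.

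Two further concrete problems. You misquote Proposition \ref{seuil}: in the doubling case one has $p*r=p_1*r$ for $r<\pi(p_1)$ and $p*r=p_1*(r-\pi(p_1))+2^{K+a_s}$ for $\pi(p_1)\leq r<2\pi(p_1)$; the identity you write, $p*r=p_1*r+2^{K+a_s}$ on all of $[0,\pi(p_1))$, describes the non-doubling case with maximal threshold, which Proposition \ref{seuil} in fact rules out. Since you also need to observe that doubling is only problematic when $\pi(p_1)$ already equals $2^K$ (otherwise $2\pi(p_1)\leq 2^K$ and there is nothing to exclude), getting this case split exactly right matters. Part (ii) is likewise only a sketch: the ``block transport'' identities you invoke are not established anywhere in the paper at the generality you need (Corollary \ref{cordraphom} shifts by $2^{2^n}$ from the origin, not by $2^{mn}$ inside an arbitrary $p$), and the inductive step through (\ref{recurrence}) is precisely where one must control how $*(p-1)$ acts on each of the three bit-regions --- the claim that left distributivity does not mix the blocks is asserted, not proved. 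As it stands, the proposal identifies the right tools but does not close either part; you would need to either carry out Dougherty's actual argument or supply the missing mechanism by which $2^{2^k}$ enters.
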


From this we deduce
\begin{lemma}\label{maximal3}
Let $k\geq 1,m\geq 0$ and $p-1=(2^{2^k+1}-1)2^{2^{k-1}+m2^k}$ then $\pi(p)\leq 2^{2^k}$ 
\end{lemma}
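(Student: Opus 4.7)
The plan is to combine both parts of Theorem \ref{Doughom}: first apply (ii) to reduce the period computation for $p$ to that of a much smaller number $q$, then apply (i) to $q$.

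Set $n = 2^k$, so $p - 1 = (2^{n+1}-1) \cdot 2^{n/2 + mn}$ has binary expansion $1^{n+1}0^{n/2+mn}$. The first step is to rewrite this in the form required by (ii). Take $z = 2^{n/2+1} - 1$ and $y = 2^n - 2^{n/2}$, and set $x = z \cdot 2^{(m+1)n}$. A direct expansion confirms that $p - 1 = x + 2^{mn} y$ with $y < 2^n$. Intuitively, one splits $p-1$ into the block of $n/2+1$ ones sitting above position $(m+1)n$ (which provides $z$) and the block of $n/2$ ones sitting in positions $mn + n/2$ through $mn + n - 1$ (which provides $y$).

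The second step is to verify the period hypothesis of (ii). Since $bit(x) = n/2+1$, the general bound \eqref{bit} yields $\pi(x+1) \leq 2^{n/2+1}$; writing $\pi(x+1) = 2^l$, this gives $l \leq n/2 + 1 \leq n$ because $k \geq 1$ forces $n \geq 2$. Theorem \ref{Doughom}(ii) therefore applies and delivers $\pi(p) = \pi(q)$, where
\[
q - 1 = 2^{l+n} - 2^n + y = 2^{l+n} - 2^{n/2}.
\]

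The third step is to apply (i) to $q$. Reducing $q - 1$ modulo $2^n$ yields the decomposition $q - 1 = (2^l - 1)\cdot 2^n + (2^n - 2^{n/2})$, with quotient $a = 2^l - 1$ and remainder $b = 2^n - 2^{n/2}$. From $l \leq n$ one gets $a < 2^n$, and from $n \geq 2$ one gets $b < 2^n - 1$. Both hypotheses of Theorem \ref{Doughom}(i) are met, so $\pi(q) \leq 2^n$ and hence $\pi(p) \leq 2^{2^k}$.

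The main obstacle is identifying the correct decomposition in the first step: one must split $p-1$ so that the resulting $x+1$ has few enough ones for \eqref{bit} to guarantee $l \leq n$, and simultaneously so that the number $q$ produced by (ii) fits the very tight constraints of (i). The split used here is essentially forced, since the $n/2$-zero gap at the bottom of the ones block in $p-1$ is precisely what places half the ones into $y$ while leaving only $n/2+1$ ones on top to form $z$.
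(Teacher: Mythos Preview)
Your proof is correct and follows essentially the same route as the paper: the same decomposition $z=2^{2^{k-1}+1}-1$, $y=2^{2^k}-2^{2^{k-1}}$ is used, part (ii) of Theorem~\ref{Doughom} reduces $\pi(p)$ to $\pi(q)$ with $q-1=2^{l+2^k}-2^{2^{k-1}}$, and then part (i) finishes. The only cosmetic difference is that the paper treats $m=0$ separately by applying (i) directly to $p$, whereas you apply (ii) uniformly for all $m\geq 0$; this is a mild streamlining but not a different idea.
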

\begin{proof}
If $m=0$ apply  $(i)$ of  Theorem \ref{Doughom} with $a=2^{2^{k-1}+1}-1$ and $b=2^{2^{k}}-2^{2^{k-1}}$ to get that 
$\pi(p)\leq2^{2^k}$. If $m>0$ one can use $(ii)$ with $x+1=(2^{2^{k-1}+1}-1)2^{(m+1)2^k}$ and
$y=2^{2^k}-2^{2^{k-1}}$ so that $p=x+y2^{m2^k}+1$. Since $bit(x)=2^{k-1}+1$ one has $\pi(x+1)=2^l\leq 2^{ 2^{k-1}+1}$ therefore $p$ has the same period as 
$2^{l+2^k}-2^{2^k}+y+1=2^{l+2^k}-2^{2^{k-1}}+1$, which is less that $2^{2^k}$ by the case $m=0$ that we just proved.
\end{proof}
In particular $p=(2^{2^k+1}-1)2^{2^{k-1}}+1$ is not a maximal element.

\textsc{Proof of Theorem \ref{maximal}.}
We prove by induction on $m=1+\sum_i 2^{a_i}$ that, if  $p-1$ has binary expansion (\ref{maximal}) then it is a maximal element.
This is clear if $m=1$ or $2$, by Proposition \ref{row3}.
If $m-1=2^a+l$ with $l<2^a$ then
$p-1$ has binary expansion $1w1^{2^a}0^{b2^a}$ for some binary word $w$, moreover if   $x$ has binary expansion
$1w0^{(b+1)2^a}$ then, by induction hypothesis, $x+1$ is a maximal element.
We can now apply $(ii)$ of  Theorem \ref{Doughom} with $y=2^{2^a}-1,\,n=2^a,\,m=b$ to see that $p$ is again maximal.

Now we prove the converse: if $p-1$ is not of the form (\ref{maximal2}) then $p$ is not maximal. The reduction modulo $2^m$ of a maximal element is again maximal, therefore
we may assume that $p-1=2^n+q-1$ where $q-1<2^n$ and $q$ has the form (\ref{maximal2}), i.e. $n$ is the smallest integer such that $p-1\mod 2^{n+1}$ does not have the form  (\ref{maximal2}). It is easy to check that 
 $p-1$ must have a binary expansion of the form
$$10^{b_0}1^{2^{a_1}}0^{b_12^{a_1-1}}1^{2^{a_2}}0^{b_22^{a_2}}\ldots 1^{2^{a_r}}0^{b_r2^{a_r}}$$
where 
\begin{enumerate}
\item
either $r\geq 1,\ 1\leq a_1\leq a_2< a_3<\ldots<a_r$ and  $b_1$ is odd.
\item or
$r\geq 2,\
 0\leq a_1= a_2< a_3<\ldots<a_r$ and  $b_1\geq 2$ is even.

\end{enumerate}
Without loss of generality we can assume that $b_0=0$ (cf Prop. \ref{power1}).

We first treat case $(1)$ for $r=1,b_1=1$: this follows from Lemma \ref{maximal3} with $a_1=k$. 
 Extension to  $b_1>1$ is obtained by applying  Theorem \ref{Doughom}, and then $r\geq 2$ follows by induction, again using Theorem \ref{Doughom}.

Case $(2)$ with $r=2$: let $a=a_1$ and  assume by contradiction that $p$ is maximal, with period $2^{2^{a+1}+1}$, then  
$$p*(2^{2^{a+1}+1}-2^{2^a}+1)-1=p'-1$$  can be computed using the algorithm above for maximal elements. One sees that $p'-1$
 has binary expansion    $11^{2^a}0^{(b_1+1)2^a}1^{b_22^a}$ therefore $p'$ is maximal by what we have proved. One can compute again
 $$p*(2^{2^{a+1}+1}-2^{2^a})=p'*(p-1)=2^{(b_1+b_2+1)2^a}-2^{(b_1+b_2)2^a}<2^{(b_1+b_2+1)2^a+1}$$ which shows that $p$ is not maximal and also that $\theta(p)\leq 2^{2^a}-1$.
 The case of $r>2$ is treated again by induction.

\qed

\subsection{Stability of maximal elements}
Let $\mathcal M$ be the set of maximal elements and $\mathcal M^*=\mathcal M\cup\{0\}$.
\begin{theorem} Let $p,q\in \mathcal M^*$  then $p*q\in\mathcal M^*$,
if  $p,q\in \mathcal M$  then $p\circ q\in\mathcal M$.
\end{theorem}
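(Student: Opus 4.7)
The plan is to use the explicit algorithm for computing $p*q$ when $p \in \mathcal{M}$, described right after the definition of maximality: one writes $q \bmod 2^n$ as an $n$-bit string (where $n = bit(p-1)$), aligns it below the $1$-positions of $p-1$ from high to low, and keeps only those positions matched by a $1$ of $q$. Together with the corollary to the preceding theorem, which characterizes maximality by the canonical form $p-1 = 10^{b_0}1^{2^{a_1}}0^{b_1 2^{a_1}}\cdots 1^{2^{a_r}}0^{b_r 2^{a_r}}$ with $a_1 < a_2 < \ldots < a_r$, the two stability claims reduce to bitwise checks that the output admits a canonical decomposition.

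For stability of $\mathcal{M}^*$ under $*$, take $p, q \in \mathcal{M}$. Reduce $q$ modulo $2^n$ and slice its $n$-bit expansion into consecutive chunks of sizes $1, 2^{a_1}, \ldots, 2^{a_r}$ corresponding to the $1$-blocks of $p-1$. The algorithm fills these chunks into the $1$-block positions of $p-1$, with the zero-gaps $b_0, b_1 2^{a_1}, \ldots, b_r 2^{a_r}$ serving as separators in $p*q$. The key structural fact is that $q$ itself, obtained from the canonical form of $q-1$ by adding $1$ (and tracking the resulting carry when the trailing gap $b'_s 2^{a'_s}$ vanishes), has a visual $1$-pattern built out of dyadic-sized blocks arranged in a strictly increasing hierarchy. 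These interact with the dyadic chunking of $p-1$ so that the visual $1$-blocks of $p*q$ have sizes which, when decomposed into sums of distinct powers of $2$, glue together with strictly increasing exponents globally across the whole pattern, which is precisely the requirement for the canonical form. This places $p*q$ in $\mathcal{M}^*$.

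For stability of $\mathcal{M}$ under $\circ$, use $p \circ q = p*(q-1) + 1$, so that $(p \circ q) - 1 = p*(q-1)$. Although $q-1$ is typically not itself maximal, its binary expansion is given directly by the canonical form of $q-1$ (or is empty when $q=1$). Running the algorithm with this as input and performing the same chunk analysis, one verifies that $p*(q-1)$ has the canonical form of $s-1$ for some maximal $s$, so that $p \circ q = s \in \mathcal{M}$.

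The main technical obstacle is the case analysis of how $q$'s (resp.\ $(q-1)$'s) visual $1$-blocks align with the chunk cuts imposed by $p-1$'s canonical decomposition: a given block of $q$ may lie entirely inside a single chunk, span several chunks that are joined by $b_i = 0$ gaps, or be split across a nontrivial gap of width $b_i 2^{a_i} > 0$. The strict dyadic monotonicity $a_1 < \ldots < a_r$ and $a'_1 < \ldots < a'_s$ on both sides is what ultimately forces the gluing of the resulting block sizes into a valid canonical form with globally increasing exponents; verifying this in each configuration will be delicate. I plan to organize the argument by induction on $r$, peeling off the top block of $p-1$ via Dougherty's Theorem \ref{Doughom}(ii) (which identifies the action of $p$ with that of a smaller maximal element on a suitable quotient) and using Proposition \ref{power1} to control the effect of modular reduction on the chunks of $q$.
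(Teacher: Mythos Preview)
Your plan is sound in outline and targets the same verification the paper carries out, but the paper organizes it around a cleaner inductive device that sidesteps the block-alignment casework you anticipate. Rather than directly analyzing how the $1$-blocks of $q$ (or $q-1$) fall against the chunk cuts of $p-1$, the paper introduces an \emph{insertion} operation: given a binary word $w$ and a zero-block $t=0^{b2^a}$, split $w=uv$ so that $u$ contains at most $2^{a+1}$ ones and $v$ contains a multiple of $2^{a+1}$ ones, and form $utv$. The maximal elements of period $2^n$ are exactly those $p$ for which $p-1$ is built from $1^n$ by a sequence of such insertions, and the key observation is that each insertion step $x_i\mapsto x_{i+1}$ in the construction of $p-1$ induces the \emph{same} zero-block insertion when passing from $p_i*q$ to $p_{i+1}*q$. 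Since $2^n*q=q\bmod 2^n$ lies in $\mathcal{M}^*$ and insertions visibly preserve the canonical form, stability of $\mathcal{M}^*$ under $*$ follows without any explicit alignment analysis.

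Your induction on $r$ via Theorem~\ref{Doughom}(ii) should also succeed, but be aware of a hidden cost: at each step the formula hands you $p*q=(x+1)*w+2^{mn}y'$ where $w$ is extracted from $(2^{l+n}-2^n+y+1)*q=2^n w+y'$, and to invoke the inductive hypothesis on $(x+1)*w$ you must first check that $w$ itself lies in $\mathcal{M}^*$. That amounts to showing that a high-bit truncation of a maximal element at the particular position $n=2^k$ is again maximal, which is exactly where the carry-tracking and the ``delicate'' case analysis you mention resurface. The paper's insertion formulation absorbs all of that bookkeeping into a single structural lemma (insertions preserve maximality), which is why it reads as cleaner. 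Both the paper and you leave the $-1$ boundary effects for $\circ$ as an exercise.
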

\begin{proof}
Now that we have a complete description of maximal elements and a simple formula for computing $p*q$ and $p\circ q=p*(q-1)+1$ whenever $p$ is maximal, this amounts just to a verification. This is not difficult but a bit cumbersome due to the boundary effects caused by substracting $1$, so we will only sketch the idea of the proof, leaving the details to the reader.
First we note that if $p$ maximal,  then the binary expansion of $p-1$ can be obtained from the following construction.
Let $w$ be a binary word and $t=0^{b2^a}$. Define the insertion of $t$ into $w$ as follows: split $w$ as $w=uv$ where the number of $1$'s in $u$ is at most $2^{a+1}$ while the number of $1$'s in $v$ is a multiple (possibly $0$) of $2^{a+1}$ then insert $t$ so as to obtain the word $utv$. The  result of the insertion is uniquely defined by $w,a,b$, 
but beware that by writing $b2^a=c2^d$ the insertion may give a different result. It is easy to see that  the set of maximal elements with period $2^n$ is exactly the set of $p$ such that the binary expansion of $p-1$ is obtained by a sequence of such insertions, starting  from the word  $1^n$.

Let now $p$  be a maximal element with period $2^n$ and let $x_0=2^n-1,\ldots,x_s=p-1$ be the sequence of numbers  obtained by the successive insertions. Then each $p_i=x_i+1$ is maximal and for any $q$, the binary expansion of  $p_{i+1}*q-1$ is obtained from that of $p_i*x$ by an insertion of the same block of zeros as $x_{i+1}$ from $x_i$. Note that $b2^a$ may be interpreted as $c2^d$ for some $d<a$ in this process.
Since $q$ is maximal,  $2^n*q$ is maximal and
since the insertion process preserves the set of maximal elements we are done.
The case of $\circ$ is similar and left to the reader.
\end{proof}
We have thus identified a subset of the integers  which forms a stable subset for the two operations $*,\circ$, on which these are given by very simple formulas. This could be the first step in determining a more general formula, valid for much larger sets of integers.


\end{document}